\providecommand{\tabularnewline}{\\}
\numberwithin{equation}{section}
\numberwithin{figure}{section}
\newtheorem{theorem}{Theorem}
\theoremstyle{plain}
\newtheorem{corollary}{Corollary}
\newtheorem{definition}{Definition}
\newtheorem{example}{Example}
\newtheorem{lemma}{Lemma}
\newtheorem{proposition}{Proposition}
\newtheorem{remark}{Remark}
\numberwithin{equation}{section}
\begin{document}

\title{On cyclic associative Abel-Grassman groupoids}

\author{\noindent M. iqbal$^{A}$}

\author{I. ahmad$^{A}$}

\email{\noindent iqbalmuhammadpk78@yahoo.com}

\email{\noindent iahmad@uom.edu.pk}

\address{\noindent A: DEPARTMENT OF MATHEMATICS, UNIVERSITY OF MALAKAND, CHAKDARA,
PAKISTAN.}

\author{M. Shah$^{B}$ }

\email{\noindent shahmaths\_problem@hotmail.com}

\address{\noindent B: DEPARTMENT OF MATHEMATICS, GOVERNMENT POST GRADUATE
COLLEGE MARDAN, PAKISTAN.}

\author{M. Irfan Ali$^{C}$}

\email{mirfanali13@yahoo.com}

\address{C: FEDRAL GOVERNMENT COLLEGE FOR GIRLS F-7 ISLAMABAD, PAKISTAN.}

\keywords{\noindent AG-groupoid, cyclic associativity, CA-AG-groupoid, CA-test,
Nuclear square, bi-commutative, right alternative, paramedial AG-groupoids.
\textit{}\\
\textit{2010 Mathematics subject classification:} 20N05, 20N02, 20N99.}
\begin{abstract}
\noindent A new subclass of AG-groupoids, so called, cyclic associative
Abel-Grassman groupoids or CA-AG-groupoid is studied. These have been
enumerated up to order $6$. A test for the verification of cyclic
associativity for an arbitrary AG-groupoid has been introduced. Various
properties of CA-AG-groupoids have been studied. Relationship among
CA-AG-groupoids and other subclasses of AG-groupoids is investigated.
\textit{\emph{It is shown that the subclass of CA-AG-groupoid is different
from that of the AG{*}-groupoid as well as AG{*}{*}-groupoids.}}
\end{abstract}
\maketitle

\section{\noindent INTRODUCTION}

\noindent \quad{}An algebraic structure satisfying the left invertive
law is called an Abel-Grassmann's groupoid (or simply AG-groupoid
\cite{protic=000026stevanovic1994}). Other names for the same structure
are left almost semigroup (LA-semigroup) \cite{kazim=000026naseeruddin1972},
left invertive groupoid \cite{holgate1994} and right modular groupoid
\cite{4}. Many authors have studied these structures and their properties.
Many intersting properties of LA semigroups have been studied in \cite{19}
and the same authors have introduced the concept of locally associative
LA semigroups in \cite{20}. Paramedial groupoids have been studied
in \cite{4}. Many different aspect of AG-groupoids have been studied
in \cite{protic=000026stevanovic1994,on cancellativity of AG-gps,idempotnet AG-band =000026 AG-3-band,on relation b/w right alternate=000026nuclear square imtiaz,9,8}.
AG{*}-groupoids and AG{*}{*}-groupoids are two different classes of
AG-groupoid and their properties have been studied in the following
\cite{AG*,AG**,17}.

\noindent \quad{}This paper has been arranged as the following. In
Section \ref{sec:CA-AG-groupoids}, notion of cyclic associative Abel-Grassman
groupoid (CA-AG-groupoid) is introduced. These have been enumerated
up to order $6$. In section \ref{sec:CA-test}, CA-test for the verification
of cyclic associativity of an AG-groupoid is given. Section \ref{sec:Various-properties-of},
is devoted to the study of various properties of CA-AG-groupoids and
their relations with different known subclasses of AG-groupoids. It
is shown in section \ref{sec:CA-AG-groupoid,-AG*-groupoids-and},
the subclass of CA-AG-groupoids is distinct from both of the AG{*}-groupoids
and AG{*}{*}-groupoids.

\section{PRELIMINARIES}

In this section some notions about Abel-Grassmann's groupoids are
given.\\
A groupoid $(S,\cdot)$ or simply $S$ satisfying the left invertive
law: $(ab)c=(cb)a$ for all$a,b,c\in S$ is called an Abel-Grassmann's
groupoid (or simply AG-groupoid \cite{kazim=000026naseeruddin1972,protic=000026stevanovic1994}).
Through out this paper an AG-groupoid will be denoted by $S,$ unless
stated otherwise. $S$ always satisfies the medial law: $ab\cdot cd=ac\cdot bd$
\cite{medialgroupoid1983}, while $S$ with left identity $e$ always
satisfies the paramedial law: $ab\cdot cd=db\cdot ca$ \cite{medialgroupoid1983}.

\noindent A groupoid $G$ is called right AG-groupoid or right almost
semigroup (RA-semigroup) \cite{kazim=000026naseeruddin1972} if $a(bc)=c(ba)\,\forall a,b,c\in G$.

\noindent \begin{enumerate}[(a)]\item $S$ is called left nuclear
square \cite{m.shah phd thesis} if $\forall\, a,b,c\in S,\, a^{2}(bc)=(a^{2}b)c$,
middle nuclear square if $a(b^{2}c)=(ab^{2})c$, right nuclear square
if $a(bc^{2})=(ab)c^{2}$. $S$ is called nuclear square \cite{m.shah phd thesis}
if it is left, middle and right nuclear square.

\noindent \item $S$ is called AG{*} \cite{AG*}, if $(ab)c=b(ac)\,\,\mbox{for all }a,b,c\in S$.

\noindent \item $S$ is called AG{*}{*} \cite{AG**}, if $a(bc)=b(ac)\,\,\mbox{ for all }a,b,c\in S$.

\noindent \item $S$ is called T$^{1}$-AG-groupoid \cite{discovery of new classes of AGgps}
if for all $a,b,c,d\in S,\, ab=cd$ implies $ba=dc$.

\noindent \item $S$ is called\textbf{ }left T$^{3}$\textbf{-}AG-groupoid
(T\textbf{$_{l}^{3}$-}AG-groupoid) if for all$\, a,b,c\in S,$\linebreak{}
$ab=ac$ implies $ba=ca$ and is called\textbf{ }right T\textbf{$^{3}$}-AG-groupoid\textbf{
}(T\textbf{$_{r}^{3}$-}AG-groupoid\textbf{)} if $ba=ca$ implies
$ab=ac$. $S$ is called T\textbf{$^{3}$}-AG-groupoid \cite{discovery of new classes of AGgps},
if it is both T$_{l}^{3}$ and T$_{r}^{3}$.

\noindent \item $S$ is called Bol{*} \cite{m.shah phd thesis} if
it satisfies the identity $a(bc\cdot d)=(ab\cdot c)d$ \linebreak{}
for all $\, a,b,c,d\in S$.

\noindent \item $S$ is called left alternative if for all $a,b\in S$,
$aa\cdot b=a\cdot ab\,$ and is called right alternative if $b\cdot aa=ba\cdot a$.
$S$ is called alternative \cite{discovery of new classes of AGgps},
if it is both left alternative and right alternative.

\noindent \item $S$ is called right commutative if for all $a,b,c\in S$,
$a(bc)=a(cb)\,$ and is called left commutative if $(ab)c=(ba)c$.
$S$ is called bi-commutative AG-groupoid, if it is right and left
commutative .\end{enumerate}

\noindent An element $a\in S$ is called idempotent if $a^{2}=a$
and an AG-groupoid having all elements as idempotent is called AG-$2$-band
(simply AG-band) \cite{idempotnet AG-band =000026 AG-3-band}. A commutative
AG-band is called a semi-lattice. An AG-groupoid in which $(aa)a=a(aa)=a$
holds~$\forall\, a\in S$ is called an AG-3-band \cite{idempotnet AG-band =000026 AG-3-band}.
An element $a\in S$ is left cancellative (respectively right cancellative)
\cite{m.shah phd thesis} if $\mbox{ \,\ensuremath{\forall}}x,y\in S$,
$\mbox{\ensuremath{ax=ay\,\Rightarrow}}x=y$ ($xa=ya\Rightarrow x=y)$.
An element is cancellative if it is both left and right cancellative.
$S$ is left cancellative (right cancellative, cancellative) if every
element of $S$ is left cancellative (right cancellative, cancellative).
$S$ may have all, some or none of its elements as cancellative \cite{on cancellativity of AG-gps}.
$S$ is called an AG-monoid, if it contains left identity $e$ such
that $ea=a$ $\forall\, a\in S$. As AG-groupoid is a non-associative
structure, thus left identity does not implies right identity and
hence the identity.

\section{\noindent CA-AG-GROUPOIDS\label{sec:CA-AG-groupoids}}

In this section a new subclass of AG-groupoids is being introduced.
First existence of this class is shown. It is interesting to see that
all AG-groupoids of order $2$ are CA-AG-groupoids. There are $20$
AG-groupoids of order $3$, out of which only $12$ are CA-AG-groupoids.
Up to order $6$, there are $9068$ CA-AG-groupoids out of $40104513$
AG-groupoids. A complete table up to order $6$ is presented in this
section.

\noindent \begin{definition}An AG-groupoid S satisfying the identity
$a(bc)=c(ab)\,$ $\forall a,b,c\in S$ is called cyclic associative
AG-groupoid (or shortly CA-AG-groupoid).\end{definition}

\noindent The following example depicts the existence of CA-AG-groupoid.\begin{example}CA-AG-groupoid
of lowest order is given in Table $1$.\end{example}

\noindent \begin{center}
\begin{tabular}{c|ccc}
$\cdot$ & $a$ & $b$ & $c$\tabularnewline
\hline 
$a$ & $a$ & $a$ & $a$\tabularnewline
$b$ & $a$ & $b$ & $a$\tabularnewline
$c$ & $a$ & $a$ & $c$\tabularnewline
\multicolumn{1}{c}{} &  &  & \tabularnewline
\multicolumn{4}{c}{Table $1$}\tabularnewline
\end{tabular}
\par\end{center}

\subsection{Enumeration of CA-AG-groupoids.}

Enumeration and classification of various mathematical entries is
a well worked area of pure mathematics. In abstract algebra the classification
of algebraic structure is an important pre-requisite for their construction.
The classification of finite simple groups is considered as one of
the major intellectual achievement of the twentieth century. Enumeration
results can be obtained by a variety of means like; combinatorial
or algebraic consideration. Non-associative structures, quasigroups
and loops have been enumerated up to size $11$ using combinatorial
consideration and bespoke exhaustive generation software. FINDER (Finite
domain enumeration) has been used for enumeration of IP loops up to
size $13$. Associative structures, semigroups and monoids have been
enumerated up to size $9$ and $10$ respectively by constraint satisfaction
techniques implemented in the Minion constraint solver with bespoke
symmetry breaking provided by the computer algebra system GAP \cite{GAP}.
The third author of this article has implemented the same techniques
in collaboration with Distler (the author of \cite{monoid of order 8 and higher})
to deal the enumeration of AG-groupoids using the constraint solving
techniques developed for semigroups and monoids. Further, they provided
a simple enumeration of the structures by the constraint solver and
obtained a further division of the domain into a subclass of AG-groupoids
using the computer algebra system GAP and were able to enumerate all
AG-groupoids up to isomorphism up to size $6$.

It is worth mentioning that the data presented in \cite{enumeration of AG-groupoids}
have been verified by one of the reviewers of the said article with
the help of Mace-$4$ and Isofilter as has been acknowledged in the
said article. All this, validate the enumeration and classification
results of our CA-AG-groupoids, a subclass of AG-groupoids, as the
same techniques and relevant data of \cite{enumeration of AG-groupoids}
has been used for the purpose. Enumeration and classification of CA-AG-groupoids
up to order $6$ is given in Table 2.

\noindent \begin{center}
\begin{tabular}{|l|l|r|r|r|r|}
\hline 
Order & $2$ & $3$ & $4$ & $5$ & $6$\tabularnewline
\hline 
\hline 
 AG-groupoids & $3$ & $20$ & $331$ & $31913$ & $40104513$\tabularnewline
\hline 
CA-AG-groupoids & $3$ & $12$ & $64$ & $491$ & $9068$\tabularnewline
\hline 
Associative AG-groupoids & $3$ & $12$ & $62$ & $446$ & $7510$\tabularnewline
\hline 
Non-associative AG-groupoids & $0$ & $8$ & $269$ & $31467$ & $40097003$\tabularnewline
\hline 
CA, Non-associative & $0$ & $0$ & $2$ & $45$ & $1565$\tabularnewline
\hline 
Associative, Non-CA & $0$ & $0$ & $0$ & $0$ & $07$\tabularnewline
\hline 
CA and Associative & $0$ & $12$ & $62$ & $446$ & $7503$\tabularnewline
\hline 
Associative \& non-commutative, CA & $0$ & $0$ & $4$ & $121$ & $5360$\tabularnewline
\hline 
\multicolumn{1}{l}{} & \multicolumn{1}{l}{} & \multicolumn{1}{r}{} & \multicolumn{1}{r}{} & \multicolumn{1}{r}{} & \multicolumn{1}{r}{}\tabularnewline
\multicolumn{6}{c}{{\small{}Table $2$: Enumeration of CA-AG-groupoids upto order $6$}}\tabularnewline
\end{tabular}
\par\end{center}

\section{\noindent CA-TEST FOR AN AG-GROUPOID\label{sec:CA-test}}

\noindent \quad{} Now a test for verification of cyclic associativity
for an arbitrary AG-groupoid is suggested. For this define the following
binary operations.
\begin{eqnarray*}
a\star b & = & a(bx)\\
a\circ b & = & x(ab)\mbox{, for some fixed \ensuremath{x\in S}}
\end{eqnarray*}
To test whether an arbitrary AG-groupoid $(S,\cdot)$ is cyclic associative,
it is sufficient to check that $a\star b=a\circ b\,\,\mbox{ for all }x\in S.$
To construct $\star$ table for any fixed $x\in S$, we rewrite $x$-column
of the `` $\cdot$ '' table as index row of the $\star$ table and
then multiply its elements from the left by the elements of the index
column of the `` $\cdot$ '' table to obtain respective rows of
the $\star$ table for $x$. The table of the operation $\circ$ for
any fixed $x\in S$ is obtained by multiplying elements of the ``
$\cdot$ '' table by $x$ from the left. If the tables of operations
$\star$ and $\circ$ coincide for all $x$ in $S$, then $\star$
coincides with $\circ$, and thus $a(bc)=c(ab)$ and equivalently
the AG-groupoid is cyclic associative. It is convenient to write $\circ$
tables under the $\star$ tables.

\noindent We illustrate the procedure in the following example.

\noindent \begin{example}Consider the AG-groupoid in the Table $3$.
\end{example}

\noindent \begin{center} %
\begin{tabular}{c|cccc}
$\cdot$ & $1$ & $2$ & $3$ & $4$\tabularnewline
\hline 
$1$ & $1$ & $1$ & $1$ & $1$\tabularnewline
$2$ & $1$ & $1$ & $1$ & $1$\tabularnewline
$3$ & $1$ & $1$ & $1$ & $1$\tabularnewline
$4$ & $1$ & $1$ & $2$ & $3$\tabularnewline
\multicolumn{1}{c}{} &  &  &  & \tabularnewline
\multicolumn{5}{c}{Table $3$}\tabularnewline
\end{tabular}\end{center}

\noindent To check cyclic associativity in the given Cayley's tables,
we extend these tables in the way as described above. The upper tables
to the right of the original ~ `` $\cdot$ '' table are constructed
for the operation $\star$, while the lower tables are for the operation
$\circ$.

\noindent \begin{center}%
\begin{tabular}{ccccc|cccc|cccc|cccc|cccc|}
\multicolumn{1}{c|}{$\cdot$} & $1$ & $2$ & $3$ & $4$ & $1$ & $1$ & $1$ & $1$ & $1$ & $1$ & $1$ & $1$ & $1$ & $1$ & $1$ & $2$ & $1$ & $1$ & $1$ & $3$\tabularnewline
\hline 
\multicolumn{1}{c|}{$1$} & $1$ & $1$ & $1$ & $1$ & $1$ & $1$ & $1$ & $1$ & $1$ & $1$ & $1$ & $1$ & $1$ & $1$ & $1$ & $1$ & $1$ & $1$ & $1$ & $1$\tabularnewline
\multicolumn{1}{c|}{$2$} & $1$ & $1$ & $1$ & $1$ & $1$ & $1$ & $1$ & $1$ & $1$ & $1$ & $1$ & $1$ & $1$ & $1$ & $1$ & $1$ & $1$ & $1$ & $1$ & $1$\tabularnewline
\multicolumn{1}{c|}{$3$} & $1$ & $1$ & $1$ & $1$ & $1$ & $1$ & $1$ & $1$ & $1$ & $1$ & $1$ & $1$ & $1$ & $1$ & $1$ & $1$ & $1$ & $1$ & $1$ & $1$\tabularnewline
\multicolumn{1}{c|}{$4$} & $1$ & $1$ & $2$ & $3$ & $1$ & $1$ & $1$ & $1$ & $1$ & $1$ & $1$ & $1$ & $1$ & $1$ & $1$ & $1$ & $1$ & $1$ & $1$ & $2$\tabularnewline
\hline 
 &  &  &  &  &  & $1$ &  &  &  & $2$ &  &  &  & $3$ &  &  &  & $4$ &  & \tabularnewline
\cline{6-21} 
 &  &  &  &  & $1$ & $1$ & $1$ & $1$ & $1$ & $1$ & $1$ & $1$ & $1$ & $1$ & $1$ & $1$ & $1$ & $1$ & $1$ & $1$\tabularnewline
 &  &  &  &  & $1$ & $1$ & $1$ & $1$ & $1$ & $1$ & $1$ & $1$ & $1$ & $1$ & $1$ & $1$ & $1$ & $1$ & $1$ & $1$\tabularnewline
 &  &  &  &  & $1$ & $1$ & $1$ & $1$ & $1$ & $1$ & $1$ & $1$ & $1$ & $1$ & $1$ & $1$ & $1$ & $1$ & $1$ & $1$\tabularnewline
 &  &  &  &  & $1$ & $1$ & $1$ & $1$ & $1$ & $1$ & $1$ & $1$ & $1$ & $1$ & $1$ & $1$ & $1$ & $1$ & $1$ & $2$\tabularnewline
\cline{6-21} 
 &  &  &  & \multicolumn{1}{c}{} &  &  &  & \multicolumn{1}{c}{} &  &  &  & \multicolumn{1}{c}{} &  &  &  & \multicolumn{1}{c}{} &  &  &  & \multicolumn{1}{c}{}\tabularnewline
 &  &  & \multicolumn{15}{c}{Extended Table $(3)$} &  &  & \multicolumn{1}{c}{}\tabularnewline
\end{tabular}\end{center}

\noindent From the extended table $(3)$, it is clear that the upper
and lower tables on the right side of the original table coincide
for all $x$ in $S$, thus the AG-groupoid in Table $3$ is a CA-AG-groupoid.

The following example establishes this test fail in case of AG-groupoids
which are not CA-AG-groupoids.

\noindent \begin{example}Consider the AG-groupoid as shown in the
following Table $4$. 

\noindent \begin{center}%
\begin{tabular}{c|ccc}
$\cdot$ & $1$ & $2$ & $3$\tabularnewline
\hline 
$1$ & $1$ & $1$ & $1$\tabularnewline
$2$ & $1$ & $1$ & $1$\tabularnewline
$3$ & $1$ & $2$ & $1$\tabularnewline
\multicolumn{1}{c}{} &  &  & \tabularnewline
\multicolumn{4}{c}{Table $4$}\tabularnewline
\end{tabular}\end{center}\end{example}

\noindent To check cyclic associativity in the given Cayley's tables,
we extend these tables in the way as described above. The upper tables
to the right of the original ~ `` $\cdot$ '' table are constructed
for the operation $\star$, while the lower tables are for the operation
$\circ$.

\noindent \begin{center}%
\begin{tabular}{cccc|ccc|ccc|ccc|}
\multicolumn{1}{c|}{$\cdot$} & $1$ & $2$ & $3$ & $1$ & $1$ & $1$ & $1$ & $1$ & $2$ & $1$ & $1$ & $1$\tabularnewline
\hline 
\multicolumn{1}{c|}{$1$} & $1$ & $1$ & $1$ & $1$ & $1$ & $1$ & $1$ & $1$ & $1$ & $1$ & $1$ & $1$\tabularnewline
\multicolumn{1}{c|}{$2$} & $1$ & $1$ & $1$ & $1$ & $1$ & $1$ & $1$ & $1$ & $1$ & $1$ & $1$ & $1$\tabularnewline
\multicolumn{1}{c|}{$3$} & $1$ & $2$ & $1$ & $1$ & $1$ & $1$ & $1$ & $1$ & $2$ & $1$ & $1$ & $1$\tabularnewline
\hline 
 &  &  &  &  & $1$ &  &  & $2$ &  &  & $3$ & \tabularnewline
\cline{5-13} 
 &  &  &  & $1$ & $1$ & $1$ & $1$ & $1$ & $1$ & $1$ & $1$ & $1$\tabularnewline
 &  &  &  & $1$ & $1$ & $1$ & $1$ & $1$ & $1$ & $1$ & $1$ & $1$\tabularnewline
 &  &  &  & $1$ & $1$ & $1$ & $1$ & $1$ & $1$ & $1$ & $2$ & $1$\tabularnewline
\cline{5-13} 
 &  &  & \multicolumn{1}{c}{} &  &  & \multicolumn{1}{c}{} &  &  & \multicolumn{1}{c}{} &  &  & \multicolumn{1}{c}{}\tabularnewline
 &  &  & \multicolumn{7}{c}{Extended Table $(4)$} &  &  & \multicolumn{1}{c}{}\tabularnewline
\end{tabular}\end{center}

\noindent As in extended table $(4)$, upper tables on the right hand
side do not coincide with the respective lower tables, thus AG-groupoid
given in Table $4$ is not a CA-AG-groupoid.

\section{\noindent VARIOUS PROPERTIES OF CA-AG-GROUPOIDS\label{sec:Various-properties-of}}

In section \ref{sec:CA-AG-groupoids}, existence of CA-AG-groupoid
has been established. Various results are given to demonstrate that
the subclass of CA-AG-groupoids is different from some very well known
subclasses of groupoids.

\noindent To begin with, following examples show that a CA-AG-groupoid
may not be a semigroup.

\noindent \begin{example}\label{exp2.Non-associative,-CA-AG-groupoid}Consider
the algebraic structure given in Table $5$.\end{example}

\noindent \begin{center}%
\begin{tabular}{c|cccc}
$\cdot$ & $1$ & $2$ & $3$ & $4$\tabularnewline
\hline 
$1$ & $2$ & $3$ & $3$ & $3$\tabularnewline
$2$ & $4$ & $3$ & $3$ & $3$\tabularnewline
$3$ & $3$ & $3$ & $3$ & $3$\tabularnewline
$4$ & $3$ & $3$ & $3$ & $3$\tabularnewline
\multicolumn{1}{c}{} &  &  &  & \tabularnewline
\multicolumn{5}{c}{Table $5$}\tabularnewline
\end{tabular}\end{center}

\noindent It is easy to verify that it is a CA-AG-groupoid. Which
is actually a non-associative CA-AG-groupoid of lowest order.

\noindent Although Table $2$ shows that up to order $5$ every associative
AG-groupoid is CA, but the next example shows that a semigroup may
not be a CA-AG-groupoid.

\noindent \begin{example}\label{Exp-Semigp, not CA}Let $M_{2}=\left\{ \left(a_{ij}\right)\mid a_{ij}\in\mathbb{Z}\right\} $,
be the set of all $2\times2$ matrices with entries from $\mathbb{Z}$.
$M_{2}$ is a semigroup under multiplication, but $M_{2}$ is not
an AG-groupoid. Since $(AB)C\neq(CB)A$ for{\footnotesize{} }$A=\left[\begin{array}{cc}
1 & 1\\
1 & 1
\end{array}\right]=B$ and $C=\left[\begin{array}{cc}
1 & 1\\
1 & 0
\end{array}\right]$ in $M_{2}$.\end{example}

\noindent However, the following is obvious.

\noindent \begin{proposition}\label{cor. comm semigp imply CA}Every
commutative semigroup is a CA-AG-groupoid.\end{proposition}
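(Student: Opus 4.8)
The plan is to unwind the definition: a CA-AG-groupoid is an AG-groupoid---that is, a groupoid obeying the left invertive law $(ab)c=(cb)a$---which in addition satisfies the cyclic associative identity $a(bc)=c(ab)$. Hence, starting from a commutative semigroup $(S,\cdot)$, I would verify these two requirements in turn, using only associativity and commutativity.

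First I would check that $S$ is an AG-groupoid. For arbitrary $a,b,c\in S$, associativity gives $(ab)c=a(bc)$, while commutativity gives $(cb)a=a(cb)=a(bc)$. Comparing the two yields $(ab)c=(cb)a$, which is exactly the left invertive law, so $S$ is an AG-groupoid.

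Next I would establish cyclic associativity. For arbitrary $a,b,c\in S$, commutativity gives $c(ab)=(ab)c$, and associativity gives $(ab)c=a(bc)$; chaining these produces $a(bc)=c(ab)$, the defining identity of a CA-AG-groupoid. Combined with the previous step, this shows that every commutative semigroup is a CA-AG-groupoid.

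I do not expect any genuine obstacle here: both identities are immediate consequences of the fact that, in a commutative semigroup, any product of a fixed collection of elements is independent of order and of bracketing. The only point requiring care is procedural rather than mathematical---since the class of CA-AG-groupoids sits inside the class of AG-groupoids, one must not skip the verification of the left invertive law and treat the cyclic identity in isolation.
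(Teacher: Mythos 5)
Your proof is correct and matches the paper's treatment: the paper simply states this proposition as ``obvious'' with no written proof, and your verification of the left invertive law $(ab)c=(cb)a$ and the cyclic identity $a(bc)=c(ab)$ from associativity and commutativity is exactly the routine unwinding that the authors leave to the reader. No gaps.
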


\noindent \begin{remark}\label{remark: commutativite AG-groupoid is associative}Every
commutative AG-groupoid is associative.\end{remark}

\noindent \begin{corollary}Every commutative AG-groupoid is a CA-AG-groupoid.\end{corollary}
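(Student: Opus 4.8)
The plan is to deduce this immediately from the two results that precede it, namely the Remark asserting that every commutative AG-groupoid is associative, and the Proposition asserting that every commutative semigroup is a CA-AG-groupoid. First I would let $S$ be an arbitrary commutative AG-groupoid. By the preceding Remark, $S$ is associative; combined with the standing hypothesis of commutativity, this exhibits $S$ as a commutative semigroup. The preceding Proposition then applies verbatim and yields that $S$ is a CA-AG-groupoid, which is exactly what is to be shown.

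If one prefers a self-contained verification rather than the chaining of prior results, I would instead check the defining identity $a(bc)=c(ab)$ directly. Since $S$ is associative (by the Remark), every triple product may be written without parentheses, so $a(bc)=abc$. Using commutativity of the outer factor together with associativity gives $c(ab)=(ab)c=abc$ as well. Comparing the two expressions shows $a(bc)=c(ab)$ for all $a,b,c\in S$, so the cyclic associative law holds and $S$ is a CA-AG-groupoid.

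I do not expect any genuine obstacle here: the entire content of the corollary is already supplied by the Remark and the Proposition, and the corollary merely records their composition. The only point that requires any care is the logical ordering of the two citations, since it is precisely associativity, furnished by the Remark, that promotes a commutative AG-groupoid to a commutative semigroup and thereby makes the Proposition applicable.
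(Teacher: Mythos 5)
Your proposal is correct and matches the paper's own (implicit) argument exactly: the corollary is stated as an immediate consequence of chaining the Remark (commutative AG-groupoid $\Rightarrow$ associative) with the Proposition (commutative semigroup $\Rightarrow$ CA-AG-groupoid). Your optional direct verification of $a(bc)=c(ab)$ via $a(bc)=abc=(ab)c=c(ab)$ is also sound, but it is just an unwinding of the same two facts rather than a different route.
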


In the following it is observed that subclass of Bol{*}-AG-groupoids
is distinct from that of CA-AG-groupoids and we have the following.
Recall that $S$ is called a Bol{*}-AG-groupoid if it satisfies the
identity $a(bc\cdot d)=(ab\cdot c)d\,\,\forall a,b,c,d\in S$.

\noindent \begin{theorem}\label{Thm: CA implies Bol*}Every CA-AG-groupoid
is Bol{*}-AG-groupoid.\end{theorem}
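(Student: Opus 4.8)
The plan is to work purely identity-wise, using the three facts available to any CA-AG-groupoid: the left invertive law $(ab)c=(cb)a$, the medial law $ab\cdot cd=ac\cdot bd$, and the defining identity $a(bc)=c(ab)$. The first thing I would record is the \emph{symmetrized} form of cyclic associativity. Applying the defining identity a second time gives $c(ab)=b(ca)$, so altogether $a(bc)=c(ab)=b(ca)$; that is, the expression $x(yz)$ is invariant under cyclic rotation of $x,y,z$, i.e. $x(yz)=y(zx)=z(xy)$. This fully cyclic version is the real workhorse of the argument, and I would lead with it.

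Next I would transform the two sides of the target identity $a(bc\cdot d)=(ab\cdot c)d$ independently into comparable shapes. For the left side I apply the cyclic identity to $a\big((bc)d\big)$, taking the outer factors to be $a$, $bc$, $d$, which yields $(bc)(da)$; the medial law then rewrites this as $(bd)(ca)$. For the right side I apply the left invertive law to $\big((ab)c\big)d$ to obtain $(dc)(ab)$, and the medial law rewrites this as $(da)(cb)$. At this point the whole theorem is reduced to the single equation $(bd)(ca)=(da)(cb)$.

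The final step, which I expect to be the only delicate point, is to expose a common outer factor and reduce to the cyclic identity. Applying the cyclic form once more to each side gives $(bd)(ca)=c\big(a(bd)\big)$ and $(da)(cb)=c\big(b(da)\big)$, so matching the two sides comes down to $a(bd)=b(da)$ --- and this is exactly an instance of the cyclic identity (with $x=a$, $y=b$, $z=d$). Hence the two reduced forms coincide and the identity $a(bc\cdot d)=(ab\cdot c)d$ follows.

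The main obstacle is organizational rather than computational. A naive attempt leads one to try to prove that arbitrary products commute, $(pq)(rs)=(rs)(pq)$, and every route to \emph{that} statement via CA turns out to be circular. The point I would emphasize is to \emph{not} chase commutativity of products: after the two medial rewrites, the leftover discrepancy $(bd)(ca)$ versus $(da)(cb)$ collapses the moment a shared outer factor $c$ is pulled out, leaving precisely the cyclic identity $a(bd)=b(da)$, with no cancellation ever required. I would therefore keep the medial law in reserve specifically to align the inner factors so that this cancellation-free matching becomes visible, and present the symmetrized cyclic identity up front so that each rewriting step reads off immediately.
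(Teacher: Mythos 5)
Your proof is correct --- every step checks out against the three identities available to a CA-AG-groupoid --- but it is organized genuinely differently from the paper's. The paper proves the theorem by a single unidirectional chain of eighteen equalities, rewriting $a(bc\cdot d)$ with cyclic associativity, the left invertive law and the medial law until $(ab\cdot c)d$ appears; the full cyclic symmetry $x(yz)=z(xy)=y(zx)$ is used implicitly throughout that chain but is never isolated as a statement. You instead make the symmetrized identity an explicit lemma, transform the two sides independently (left side $a(bc\cdot d)=(bc)(da)=(bd)(ca)$ via the cyclic identity and mediality; right side $(ab\cdot c)d=(dc)(ab)=(da)(cb)$ via the left invertive law and mediality), and close the residual gap $(bd)(ca)=(da)(cb)$ by extracting the common outer factor $c$ from both sides, which reduces everything to the single cyclic instance $a(bd)=b(da)$. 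This meet-in-the-middle decomposition buys brevity (roughly seven applications of the identities rather than eighteen), easy verifiability, and a reusable lemma: the explicit three-fold cyclic symmetry would also streamline several later arguments in the paper, such as the nuclear-square and semi-lattice results, which repeatedly re-derive instances of it inline. The only thing the paper's longer chain buys in exchange is that it requires no auxiliary statement at all; it is otherwise harder to check and to motivate.
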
\begin{proof}Let $S$ be a CA-AG-groupoid
and $a,b,c,d\in S$, then by using cyclic associativity, left invertive
law and medial law;
\begin{align*}
a(bc\cdot d) & =d(a\cdot bc)=d(c\cdot ab)=ab\cdot dc=c(ab\cdot d)=c(db\cdot a)\\
 & =a(c\cdot db)=db\cdot ac=da\cdot bc=c(da\cdot b)=c(ba\cdot d)\\
 & =d(c\cdot ba)=d(a\cdot cb)=d(b\cdot ac)=ac\cdot db=b(ac\cdot d)\\
 & =b(dc\cdot a)=a(b\cdot dc)=dc\cdot ab=(ab\cdot c)d\\
\Rightarrow a(bc\cdot d) & =(ab\cdot c)d.
\end{align*}
Hence $S$ is a Bol{*}-AG-groupoid.\end{proof}

\noindent Following example shows that the converse of the Theorem
\ref{Thm: CA implies Bol*}, is not true.

\noindent \begin{example}Consider the Bol{*}-AG-groupoid of order
$3$ given in Table $6$.\end{example}

\noindent \begin{center}%
\begin{tabular}{c|ccc}
$\cdot$ & $1$ & $2$ & $3$\tabularnewline
\hline 
$1$ & $1$ & $2$ & $3$\tabularnewline
$2$ & $3$ & $1$ & $2$\tabularnewline
$3$ & $2$ & $3$ & $1$\tabularnewline
\multicolumn{1}{c}{} &  &  & \tabularnewline
\multicolumn{4}{c}{Table $6$}\tabularnewline
\end{tabular}\end{center}

\noindent which is not a CA-AG-groupoid, because $1(2\cdot3)\neq3(1\cdot2)$.

\noindent Further, we have the following.

\noindent \begin{lemma}\label{lem:Bol*-AG-band is commutative}Every
Bol{*}-AG-band is commutative semigroup.\end{lemma}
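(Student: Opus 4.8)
The plan is to extract from the four-variable Bol* identity two simpler three-variable identities by exploiting idempotency, deduce commutativity from them, and then invoke Remark \ref{remark: commutativite AG-groupoid is associative} to obtain associativity for free. Throughout I would use only the left invertive law $(xy)z=(zy)x$ and the band hypothesis $x^{2}=x$.

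First I would specialise the Bol* identity $a(bc\cdot d)=(ab\cdot c)d$ by setting $d=c$. On the left, $bc\cdot c=(cc)b=cb$ by the left invertive law and idempotency, so the left side becomes $a(cb)$; on the right, $(ab\cdot c)c=(cc)(ab)=c(ab)$ for the same reasons. This yields $a(cb)=c(ab)$, which after relabelling is precisely the AG** identity $a(bc)=b(ac)$. Next I would set $b=c$ in Bol*: the left side collapses to $a(bd)$ since $bb\cdot d=bd$, while the right side becomes $(ba)d$ because $ab\cdot b=(bb)a=ba$. Relabelling $d$ as $c$ gives the second auxiliary identity $a(bc)=(ba)c$.

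With these two identities in hand, commutativity follows by evaluating each at $c=a$. From $a(bc)=b(ac)$ I obtain $a(ba)=b(aa)=ba$, while from $a(bc)=(ba)c$ together with the left invertive computation $(ba)a=(aa)b=ab$ I obtain $a(ba)=ab$. Comparing the two resulting expressions for $a(ba)$ forces $ab=ba$, so $S$ is commutative. Finally, since every commutative AG-groupoid is associative by Remark \ref{remark: commutativite AG-groupoid is associative}, $S$ is a commutative semigroup, as required.

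I expect the only genuine obstacle to be the discovery of the correct specialisations of the Bol* identity: once one sees that $d=c$ produces the AG** law and $b=c$ produces $a(bc)=(ba)c$, and that evaluating both at $c=a$ pins down commutativity, the remaining work is a routine application of the left invertive law and idempotency, with associativity handed over entirely to the cited Remark.
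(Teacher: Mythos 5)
Your proof is correct; every step checks against the left invertive law, idempotency, and the Bol{*} identity as stated in the paper. The route, however, is organized differently from the paper's. The paper proves commutativity by a single equational chain
\[
ab=(aa)b=(ba)a=(bb\cdot a)a=(ab\cdot b)a=a(bb\cdot a)=a(ab\cdot b)=(aa\cdot b)b=bb\cdot aa=b^{2}a^{2}=ba,
\]
in which Bol{*} is invoked twice in mid-chain (at $(ab\cdot b)a=a(bb\cdot a)$ and at $a(ab\cdot b)=(aa\cdot b)b$) and no intermediate identity is ever isolated. You instead factor the argument through two clean three-variable laws obtained by specialising Bol{*} along the band condition: setting $d=c$ gives the AG{*}{*} identity $a(bc)=b(ac)$, and setting $c=b$ gives (a relabelling of) the AG{*} identity $a(bc)=(ba)c$; commutativity then drops out by evaluating both at $c=a$. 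The two proofs share the same skeleton --- derive $ab=ba$ from Bol{*} plus idempotency, then hand associativity to Remark \ref{remark: commutativite AG-groupoid is associative} --- but your decomposition is more modular and proves slightly more: it shows that every Bol{*}-AG-band satisfies both the AG{*} and AG{*}{*} laws, which connects this lemma to the paper's later results on AG{*}-bands (Theorem \ref{cor: ag-band imply CA} and Lemma \ref{thm:An-AG*-groupoid idemp is semigp}), whereas the paper's chain yields commutativity directly but leaves behind no reusable intermediate statements.
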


\noindent \begin{proof}Let $S$ be a Bol{*}-AG-band and $\, a,b\in S$.
Then
\begin{align*}
ab & =(aa)b=(ba)a=(bb\cdot a)a=(ab\cdot b)a=a(bb\cdot a)=a(ab\cdot b)\\
 & =(aa\cdot b)b=bb\cdot aa=b^{2}a^{2}=ba\Rightarrow ab=ba.
\end{align*}
Thus $S$ is commutative and hence the result follows by Remark \ref{remark: commutativite AG-groupoid is associative}.\end{proof}

\noindent From Lemma \ref{lem:Bol*-AG-band is commutative} and Proposition
\ref{cor. comm semigp imply CA}, it is obtained;

\noindent \begin{corollary}Every Bol{*}-AG-band is CA-AG-groupoid.\end{corollary}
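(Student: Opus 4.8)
The plan is to obtain this corollary as an immediate composition of the two facts already in hand, rather than through any fresh computation with the Bol* identity. The key observation is that the target class has already been shown to contain all commutative semigroups, via Proposition \ref{cor. comm semigp imply CA}, while Lemma \ref{lem:Bol*-AG-band is commutative} has placed every Bol*-AG-band inside the class of commutative semigroups. So the whole strategy reduces to chaining these two containments; no cyclic associativity needs to be verified directly.

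Concretely, I would first fix a Bol*-AG-band $S$ and invoke Lemma \ref{lem:Bol*-AG-band is commutative} to conclude that $S$ is a commutative semigroup. This is where all the genuine work sits, but it is already done: that lemma extracts $ab=ba$ from the interplay of idempotence, the left invertive law, the medial law and the Bol* identity, and then upgrades commutativity to associativity through Remark \ref{remark: commutativite AG-groupoid is associative}. Second, I would feed the resulting commutative semigroup into Proposition \ref{cor. comm semigp imply CA}, which yields at once that $S$ satisfies $a(bc)=c(ab)$ for all $a,b,c\in S$, i.e. that $S$ is a CA-AG-groupoid.

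I do not expect a genuine obstacle here: once the lemma and the proposition are available, the corollary is a one-line syllogism with nothing left to check by hand. The only point demanding a moment's care is confirming that the hypotheses dovetail, namely that a Bol*-AG-band is simultaneously an AG-groupoid (so the AG-groupoid framing is respected) and a bona fide commutative semigroup (so Proposition \ref{cor. comm semigp imply CA} applies). Since every AG-band is by definition an AG-groupoid, and the lemma supplies both commutativity and associativity, these two prerequisites are met automatically and the two results compose without friction.
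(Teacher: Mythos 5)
Your proposal is correct and is exactly the paper's own argument: the paper derives this corollary by composing Lemma \ref{lem:Bol*-AG-band is commutative} (every Bol{*}-AG-band is a commutative semigroup) with Proposition \ref{cor. comm semigp imply CA} (every commutative semigroup is a CA-AG-groupoid). Nothing further is needed.
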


\noindent \begin{theorem}\label{thm:Every-CA-AG-band commutative}Every
CA-AG-band is commutative semigroup. \end{theorem}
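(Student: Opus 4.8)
The plan is to reduce the whole statement to commutativity. By Remark \ref{remark: commutativite AG-groupoid is associative} a commutative AG-groupoid is automatically associative, so once I show that a CA-AG-band $S$ is commutative I am done. Throughout I would use four facts freely: idempotency ($x^{2}=x$ for all $x$), the left invertive law $(xy)z=(zy)x$, the medial law $xy\cdot zw=xz\cdot yw$, and cyclic associativity $a(bc)=c(ab)$.

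First I would isolate the real engine of the argument: a single identity that already ties a product to its reversal. Reading $a(ab)$ as an instance of $a(bc)$ and then applying idempotency gives the \emph{fundamental identity}
\[
a(ab)=b(aa)=ba,
\]
and symmetrically $b(ba)=ab$. From these, together with the medial law and idempotency, I would record two auxiliary products,
\[
(ab)a=(ab)(aa)=(aa)(ba)=a(ba)=ba,\qquad (ba)b=(ba)(bb)=(bb)(ab)=b(ab)=ab,
\]
where the final equalities in each case are again cyclic associativity combined with the fundamental identity.

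The finish is then a two-pronged computation. On one side, the left invertive law collapses the mixed product:
\[
(ab)(ba)=\big((ba)b\big)a=(ab)a=ba,
\]
using the two auxiliary identities. On the other side, starting from the symmetric fundamental identity and substituting $ba=a(ab)$ into the inner factor, one application of cyclic associativity (treating $ab$ as a single element) yields
\[
ab=b(ba)=b\big(a(ab)\big)=(ab)(ba).
\]
Comparing the two displays gives $ab=(ab)(ba)=ba$, so $S$ is commutative and hence, by Remark \ref{remark: commutativite AG-groupoid is associative}, a commutative semigroup.

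The hard part will be avoiding circularity rather than any individual calculation. Because cyclic associativity and the medial law are symmetric in $a$ and $b$, almost every naive chain of rewrites merely re-derives $ab=ab$; the left invertive law $(xy)z=(zy)x$ is the one axiom that genuinely distinguishes the two orders. So the decisive move is to route the evaluation of $(ab)(ba)$ through the left invertive law, forcing it down to $ba$, while independently producing $ab=(ab)(ba)$ purely from the cyclic identities. It is precisely the asymmetry injected by the left invertive law that breaks the symmetry and pins down $ab=ba$.
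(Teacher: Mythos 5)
Your proof is correct: every step checks against the four laws you list, and your closing move --- establish $ab=ba$, then invoke Remark \ref{remark: commutativite AG-groupoid is associative} --- is exactly how the paper concludes as well. The equational route, however, is genuinely different. The paper's proof is a single unbroken chain
\begin{align*}
ab & =(aa)b=(ba)a=(bb\cdot a)a=(ab\cdot b)a=ab\cdot ab=aa\cdot bb\\
 & =b(aa\cdot b)=b(b\cdot aa)=b(a\cdot ba)=ba\cdot ba=bb\cdot aa=ba,
\end{align*}
whose pivots are squares: left invertive manipulations show $ab=(ab)(ab)$, mediality converts this to $(aa)(bb)$, and then cyclic associativity walks that expression over to $(ba)(ba)=(bb)(aa)=ba$. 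You instead extract named, reusable identities --- the fundamental $a(ab)=ba$ and the auxiliaries $(ab)a=ba$, $(ba)b=ab$ --- and pivot on the mixed product $(ab)(ba)$, evaluating it two ways: down to $ba$ by the left invertive law, and up from $ab$ by cyclic associativity. The two arguments use the same toolkit and have comparable total length, but yours is more modular and makes the mechanism visible; your observation that the left invertive law is the one axiom breaking the $a\leftrightarrow b$ symmetry is exactly right, and it is precisely what the paper's compact chain obscures. Conversely, the paper's version needs no intermediate lemmas and fits in one display, which is presumably why the authors wrote it that way.
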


\noindent \emph{Proof.} Let $S$ be a CA-AG-band and $a,b\in S.$
Then 
\begin{align*}
ab & =(aa)b=(ba)a=(bb\cdot a)a=(ab\cdot b)a\\
 & =ab\cdot ab=aa\cdot bb=b(aa\cdot b)=b(b\cdot aa)\\
 & =b(a\cdot ba)=ba\cdot ba=bb\cdot aa=ba.\qed
\end{align*}

\noindent As every CA-AG-groupoid is Bol{*}-AG-groupoid (by Theorem
\ref{Thm: CA implies Bol*}) and every Bol{*}-AG-groupoid is paramedial
\cite[Lemma 9]{m.shah phd thesis}, so the following corollary is
obvious:

\noindent \begin{corollary}\label{thm2: CA implies paramedial}Every
CA-AG-groupoid is paramedial.\end{corollary}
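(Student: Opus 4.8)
The plan is to obtain this corollary purely by composition, exactly as the surrounding sentence indicates: Theorem \ref{Thm: CA implies Bol*} already gives that every CA-AG-groupoid is a Bol*-AG-groupoid, and \cite[Lemma 9]{m.shah phd thesis} supplies the implication that every Bol*-AG-groupoid is paramedial, i.e. satisfies $ab\cdot cd = db\cdot ca$. Chaining these two implications, a CA-AG-groupoid is first a Bol*-AG-groupoid and therefore paramedial, which is the whole assertion. Thus the only ingredients are a result proved earlier in this section together with one cited result, and no fresh computation is required.

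If instead a self-contained argument is preferred, so that the corollary does not lean on the external Lemma 9, I would aim for a direct rewriting chain from the defining identity $a(bc)=c(ab)$, the left invertive law $(ab)c=(cb)a$, and the medial law $ab\cdot cd=ac\cdot bd$, in the same style as the proof of Theorem \ref{Thm: CA implies Bol*}. Starting from $ab\cdot cd$, one would apply cyclic associativity to bring the fourth letter to the front, use the left invertive law on the inner bracket, and then re-apply cyclic associativity to collapse the expression; interleaving one application of the medial law is what is needed to steer the letters into the target arrangement $db\cdot ca$.

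The main obstacle is essentially absent in the composition route: once Theorem \ref{Thm: CA implies Bol*} and Lemma 9 are granted, the implication is immediate, and all the genuine content sits inside those two statements. In the direct route the difficulty is purely bookkeeping, and it is more delicate than it first looks: naive chains of cyclic associativity tend to terminate at \emph{skew} variants such as $ab\cdot cd = dc\cdot ab$ or $ab\cdot cd = db\cdot ac$ rather than the paramedial form $ab\cdot cd = db\cdot ca$, so the sensitive point is inserting the medial law at precisely the right step to convert the trailing factor $ac$ into $ca$ without invoking any commutativity (which a general CA-AG-groupoid need not possess). Because the composition route sidesteps this entirely, that is the proof I would record, keeping the direct identity chase only as an optional verification.
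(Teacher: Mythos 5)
Your primary argument is exactly the paper's own: it derives the corollary by composing Theorem \ref{Thm: CA implies Bol*} (every CA-AG-groupoid is Bol{*}) with the cited result that every Bol{*}-AG-groupoid is paramedial \cite[Lemma 9]{m.shah phd thesis}, with no further computation. Your proposal is correct and matches that route, so nothing needs to be changed; the optional direct rewriting chain is unnecessary for the record.
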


\noindent The following example shows that the converse of Corollary
\ref{thm2: CA implies paramedial} is not valid.

\noindent \begin{example}Table $7$ represents paramedial AG-groupoid
of order $3$, which is not a CA-AG-groupoid. as $c(cb)\neq b(cc)$.\end{example}

\noindent \begin{center}%
\begin{tabular}{c|ccc}
$\cdot$ & $a$ & $b$ & $c$\tabularnewline
\hline 
$a$ & $a$ & $a$ & $a$\tabularnewline
$b$ & $a$ & $a$ & $a$\tabularnewline
$c$ & $a$ & $b$ & $a$\tabularnewline
\multicolumn{1}{c}{} &  &  & \tabularnewline
\multicolumn{4}{c}{Table $7$}\tabularnewline
\end{tabular}\end{center}

It is proved in \cite{m.shah phd thesis} that every paramedial AG-band
is commutative semigroup. Also by Proposition \ref{cor. comm semigp imply CA},
every commutative semigroup is CA-AG-groupoid. Hence we have the following.

\begin{corollary}\label{cor:-paramedial-AG-band}Every paramedial
AG-band is CA-AG-groupoid.\end{corollary}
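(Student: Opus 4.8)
The plan is to reduce the statement to two facts already at our disposal. First I would recall the classification result, established in \cite{m.shah phd thesis}, that every paramedial AG-band is a commutative semigroup. Second I would invoke Proposition \ref{cor. comm semigp imply CA}, which asserts that every commutative semigroup is a CA-AG-groupoid. Chaining these two implications immediately yields that every paramedial AG-band is a CA-AG-groupoid, which is exactly the claim. Since both ingredients are on hand, the corollary is essentially a one-line consequence; the only substantive content lives in the cited commutativity result, and there is no genuine conceptual obstacle to overcome.

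If instead one wanted a self-contained argument that does not lean on the thesis, I would establish the commutativity of a paramedial AG-band directly. For $a,b\in S$, idempotency lets me write $ab=(aa)(bb)$; applying the paramedial law $ab\cdot cd=db\cdot ca$ (with the substitution $a,a,b,b$ into its four slots) converts $(aa)(bb)$ into $(ba)(ba)$, and one further use of idempotency collapses $(ba)(ba)$ to $ba$. This gives the whole computation in a single display:
\[
ab=(aa)(bb)=(ba)(ba)=ba,
\]
so $S$ is commutative. Having commutativity in hand, I would then appeal to Remark \ref{remark: commutativite AG-groupoid is associative} to conclude that $S$ is in fact a commutative semigroup, and finally apply Proposition \ref{cor. comm semigp imply CA} to deduce that $S$ is a CA-AG-groupoid.

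The verification underlying Proposition \ref{cor. comm semigp imply CA} is itself routine: in a commutative semigroup both $a(bc)$ and $c(ab)$ equal the common associative product, so cyclic associativity holds automatically. The one point demanding care is the bookkeeping in the paramedial step, namely matching the four argument positions of $ab\cdot cd=db\cdot ca$ correctly; this is mechanical rather than difficult, and is the only place where a slip could occur. I therefore expect the proof to be short, with the cited commutativity fact (or the one-line idempotency computation replacing it) carrying all the weight.
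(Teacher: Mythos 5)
Your primary argument is exactly the paper's proof: the paper, too, obtains the corollary by citing the thesis result that every paramedial AG-band is a commutative semigroup and chaining it with Proposition \ref{cor. comm semigp imply CA}. Your self-contained fallback is also sound --- the substitution into $ab\cdot cd=db\cdot ca$ correctly yields $ab=(aa)(bb)=(ba)(ba)=ba$, after which Remark \ref{remark: commutativite AG-groupoid is associative} and the Proposition finish the job --- so it is a nice bonus that removes the dependence on the cited thesis, but it is not needed to match the paper.
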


Now relation of CA-AG-groupoid with left, middle and right nuclear
square AG-groupoids is being studied in the following.

\begin{theorem}\label{thm3: ca-aggp is left nuclear sq.}\label{Every-CAag is right Nuc sq}Let
$S$ be a CA-AG-groupoid. Then the following hold:

\begin{enumerate}[(i)]\item $S$ is left nuclear square.

\item $S$ is right nuclear square.\end{enumerate}\end{theorem}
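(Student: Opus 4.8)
The plan is to prove each of (i) and (ii) by a short chain of rewrites using only the four tools already available: the left invertive law $(ab)c=(cb)a$, the medial law $ab\cdot cd=ac\cdot bd$, cyclic associativity $a(bc)=c(ab)$, and the fact that every CA-AG-groupoid is paramedial, i.e. $ab\cdot cd=db\cdot ca$ (Corollary \ref{thm2: CA implies paramedial}). The guiding principle is that one should never attempt to slide a single element past a product directly, since that would demand a commutativity relation (for instance $c$ commuting with $a^{2}b$, or $a^{2}b=ba^{2}$) that fails in a general CA-AG-groupoid. Instead, the aim is to transform a triple product of the shape $(xy)z$ or $x(yz)$ into a genuine product of two binary products $(\,\cdot\,)(\,\cdot\,)$, at which point the medial or paramedial law can act.

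For part (i), I would start from the right-hand side $(a^{2}b)c$ and work toward $a^{2}(bc)$. First rewrite $a^{2}b=(aa)b=(ba)a$ by the left invertive law, so that $(a^{2}b)c=\big((ba)a\big)c$. Applying the left invertive law again (with the block $ba$ playing the role of a single factor) turns this into the two-block product $(ca)(ba)$. Now the paramedial law applies verbatim: $(ca)(ba)=(aa)(bc)=a^{2}(bc)$, which closes the identity $a^{2}(bc)=(a^{2}b)c$. The only nontrivial choice here is the bracketing in the second use of left invertivity; once $(a^{2}b)c$ is presented as $(ca)(ba)$, the paramedial step is forced.

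For part (ii), I would start from $(ab)c^{2}=(ab)(cc)$ and reduce it to $a(bc^{2})$ using cyclic associativity repeatedly, together with one application of the left invertive law. Cyclic associativity (with $ab$ as the leading factor) gives $(ab)(cc)=c\big((ab)c\big)$; then the left invertive law rewrites $(ab)c=(cb)a$, so the expression becomes $c\big((cb)a\big)$. A second use of cyclic associativity yields $a\big(c(cb)\big)$, and a third, applied to the inner factor, simplifies $c(cb)=b(cc)=bc^{2}$. Assembling these gives $(ab)c^{2}=a(bc^{2})$, which is right nuclear squareness.

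I expect the main obstacle to be purely bookkeeping: selecting, at each stage, which three-factor subexpression to regroup so that cyclic associativity exposes the factor one wants to peel off, while avoiding the tempting but dead-end reductions that would leave a residual commutativity obligation. In particular, for (i) the temptation is to rewrite both sides to a common form $c(a^{2}b)$ and then try to commute; the fix is to commit to the two-block form $(ca)(ba)$ and invoke paramediality. For (ii) the analogous trap is reducing to $c^{2}(ab)$ on one side and needing $c^{2}$ to commute with $ab$; the fix is the cyclic-associativity cascade above, which never requires such commutation.
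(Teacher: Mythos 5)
Your proposal is correct; both chains check out line by line. For part (i) your argument is essentially the paper's proof written backwards: the paper runs $a^{2}(bc)=(aa)(bc)=(ca)(ba)=(ba\cdot a)c=(aa\cdot b)c=(a^{2}b)c$ (paramedial, then left invertive law twice), which is exactly your chain $(a^{2}b)c=((ba)a)c=(ca)(ba)=(aa)(bc)$ read in reverse, so there is nothing to compare there. For part (ii), however, you take a genuinely different and shorter route. The paper's proof of (ii) explicitly invokes part (i) (it rewrites $(c^{2}b)a=c^{2}(ba)$ using left nuclear squareness) and then works through a long chain of about a dozen steps mixing cyclic associativity, paramediality and the left invertive law. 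Your proof, $(ab)c^{2}=(ab)(cc)=c\bigl((ab)c\bigr)=c\bigl((cb)a\bigr)=a\bigl(c(cb)\bigr)=a\bigl(b(cc)\bigr)=a(bc^{2})$, uses only three applications of cyclic associativity and one of the left invertive law, needs neither paramediality nor part (i), and therefore makes the two parts logically independent. What your version buys is brevity and a cleaner dependency structure (in particular, (ii) no longer relies on the Corollary that CA implies paramedial); what the paper's version buys is an illustration of how the paramedial identity interacts with cyclic associativity, at the cost of a much longer computation.
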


\begin{proof}Let $S$ be a CA-AG-groupoid and $a,b,c\in S$. Then

\begin{enumerate}[(i)]\item By Corollary \ref{thm2: CA implies paramedial}
and left invertive law, we have 
\[
a^{2}(bc)=(aa)(bc)=(ca)(ba)=(ba\cdot a)c=(aa\cdot b)c=(a^{2}b)c.
\]
Hence $S$ is left nuclear square.

\item By left invertive law, Part $(i)$, cyclic associativity and
Corollary \ref{thm2: CA implies paramedial}.
\begin{align*}
(ab)c^{2} & =(c^{2}b)a=c^{2}(ba)=a(c^{2}b)=a(cc\cdot b)=a(bc\cdot c)=c(a\cdot bc)\\
 & =bc\cdot ca=ac\cdot cb=b(ac\cdot c)=b(cc\cdot a)=a(b\cdot cc)=a(bc^{2}).
\end{align*}
\end{enumerate} \qquad{}Hence $S$ is right nuclear square.\end{proof}

Following example shows that neither a left nuclear square nor a right
nuclear square AG-groupoid is always a CA-AG-groupoid.

\begin{example}Left nuclear square AG-groupoid of order $3$ is given
in Table $8$, which is not a CA-AG-groupoid, because $3(3\cdot2)\neq2(3\cdot3)$.\end{example}

\noindent \begin{center}%
\begin{tabular}{c|ccc}
$\cdot$ & $1$ & $2$ & $3$\tabularnewline
\hline 
$1$ & $1$ & $1$ & $1$\tabularnewline
$2$ & $1$ & $1$ & $1$\tabularnewline
$3$ & $1$ & $2$ & $1$\tabularnewline
\multicolumn{1}{c}{} &  &  & \tabularnewline
\multicolumn{4}{c}{Table $8$}\tabularnewline
\end{tabular}\end{center}

\noindent In Table $9$ a right nuclear square AG-groupoid of order
$3$ is given which is not a CA-AG-groupoid because $3(3\cdot2)\neq2(3\cdot3)$.

\noindent \begin{center}
\begin{tabular}{c|ccc}
$\cdot$  & $1$  & $2$  & $3$\tabularnewline
\hline 
$1$  & $1$  & $1$  & $1$\tabularnewline
$2$  & $1$  & $1$  & $3$\tabularnewline
$3$  & $1$  & $2$  & $1$\tabularnewline
\multicolumn{1}{c}{} &  &  & \tabularnewline
\multicolumn{4}{c}{Table $9$}\tabularnewline
\end{tabular}
\par\end{center}

Next attention is paid towards middle nuclear square AG-groupoid.
Generally speaking no relation exist between middle nuclear square
AG-groupoids and CA-AG-groupoid. The following example shows that
neither every middle nuclear square AG-groupoid is CA-AG-groupoid,
nor every CA-AG-groupoid is middle nuclear square AG-groupoid .

\begin{example}The following AG-groupoid of order $3$ given in Table
$10$, is middle nuclear square AG-groupoid but not CA, because $c(cb)\neq b(cc)$.\end{example}

\noindent \begin{center}%
\begin{tabular}{c|ccc}
$\cdot$ & $a$ & $b$ & $c$\tabularnewline
\hline 
$a$ & $a$ & $a$ & $a$\tabularnewline
$b$ & $a$ & $a$ & $a$\tabularnewline
$c$ & $a$ & $b$ & $b$\tabularnewline
\multicolumn{1}{c}{} &  &  & \tabularnewline
\multicolumn{4}{c}{Table $10$}\tabularnewline
\end{tabular}\end{center}

CA-AG-groupoid of order $5$, given in Table $11$, is not a middle
nuclear square AG-groupoid since $5(5^{2}\cdot5)\neq(5\cdot5^{2})5$.

\noindent \begin{center}%
\begin{tabular}{c|ccccc}
$\cdot$ & $1$ & $2$ & $3$ & $4$ & $5$\tabularnewline
\hline 
$1$ & $1$ & $1$ & $1$ & $1$ & $1$\tabularnewline
$2$ & $1$ & $1$ & $1$ & $1$ & $1$\tabularnewline
$3$ & $1$ & $1$ & $1$ & $1$ & $2$\tabularnewline
$4$ & $1$ & $1$ & $1$ & $1$ & $2$\tabularnewline
$5$ & $1$ & $1$ & $1$ & $3$ & $4$\tabularnewline
\multicolumn{1}{c}{} &  &  &  &  & \tabularnewline
\multicolumn{6}{c}{Table $11$}\tabularnewline
\end{tabular}\end{center}

However, a left alternative CA-AG-groupoid is middle nuclear square.
As

\begin{theorem}\label{thm: left alternative CAgp imply middle nuclare square}Every
left alternative CA-AG-groupoid is middle nuclear square.\end{theorem}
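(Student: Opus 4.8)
The goal is to establish the middle nuclear square identity $a(b^{2}c)=(ab^{2})c$ for all $a,b,c$ in a left alternative CA-AG-groupoid $S$. The plan is to show that both sides reduce to the single common expression $(ac)b^{2}$, so that the equality follows by transitivity. The tools at hand are cyclic associativity $a(bc)=c(ab)$, the left invertive law, the medial law, the paramedial law (available since $S$ is paramedial by Corollary \ref{thm2: CA implies paramedial}), and the defining left alternative identity $b^{2}x=b(bx)$.

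First I would reduce the left-hand side. Applying cyclic associativity twice --- once treating $b^{2}$ as a single factor to get $a(b^{2}c)=c(ab^{2})$, and once more to get $c(ab^{2})=b^{2}(ca)$ --- brings the square into outer-left position. This is exactly the position in which the left alternative law applies, so I can write $b^{2}(ca)=b(b(ca))$. A further use of cyclic associativity on the inner bracket and then on the whole expression yields a product of two products, after which a medial/paramedial rearrangement collects the two copies of $b$ back into $b^{2}$ and lands on $(ac)b^{2}$.

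For the right-hand side I would start with the left invertive law to flip $(ac)b^{2}=(b^{2}c)a$, and again use the left alternative law to split $b^{2}c=b(bc)$. Alternating the left invertive law with cyclic associativity then shuttles the factors into the arrangement $(b(ab))c$, and the small observation $ab^{2}=b(ab)$ (itself an instance of cyclic associativity) recovers $(ab^{2})c$. Putting the two reductions together gives $a(b^{2}c)=(ac)b^{2}=(ab^{2})c$.

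I expect the main obstacle to be strategic rather than computational: spotting that $(ac)b^{2}$ is the correct meeting point, and, more importantly, timing the single application of the left alternative hypothesis. Because left alternativity only fires when the squared element sits in the outer-left slot (as in $b^{2}x$), each side must first be massaged by cyclic associativity and the left invertive law to maneuver $b^{2}$ (respectively $b(bc)$) into that exact position before the hypothesis can be invoked; everything after that is routine AG-groupoid bookkeeping in the spirit of the proof of Theorem \ref{Every-CAag is right Nuc sq}.
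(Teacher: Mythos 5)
Your proposal is correct, and both of your half-chains can be completed exactly as sketched: for the left side, $a(b^{2}c)=c(ab^{2})=b^{2}(ca)=b(b(ca))=b(a(bc))=(bc)(ba)=(ac)b^{2}$, where the final step needs the \emph{paramedial} law (available by Corollary \ref{thm2: CA implies paramedial}; the medial law alone would just return you to $b^{2}(ca)$), and for the right side, $(ac)b^{2}=(b^{2}c)a=(b(bc))a=(a(bc))b=(c(ab))b=(b(ab))c=(ab^{2})c$ using only the left invertive law, cyclic associativity and $b(ab)=a(bb)$. This is essentially the paper's own argument: its single left-to-right chain likewise applies left alternativity exactly twice and passes through the analogous collected-square midpoint $(ca)b^{2}$, so your meet-in-the-middle organization (and the one explicit appeal to paramediality) is only a cosmetic difference.
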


\begin{proof}Let $S$ be a left alternative CA-AG-groupoid and $a,b,c\in S$.
Then by definition of left alternative, cyclic associativity, Corollary
\ref{thm2: CA implies paramedial} and left invertive law;
\begin{align*}
a(b^{2}c) & =a(bb\cdot c)=a(b\cdot bc)=bc\cdot ab=(ab\cdot c)b=(cb\cdot a)b\\
 & =ba\cdot cb=b(ba\cdot c)=b(ca\cdot b)=b(b\cdot ca)=ca\cdot bb\\
 & =(bb\cdot a)c=(b\cdot ba)c=(a\cdot bb)c=(ab^{2})c.
\end{align*}
Hence $S$ is middle nuclear square.\end{proof}

By coupling Theorem \ref{thm3: ca-aggp is left nuclear sq.} and Theorem
\ref{thm: left alternative CAgp imply middle nuclare square}, the
following corollary is obvious.

\begin{corollary}\label{cor:left alternate CAgp is nuclear square}Every
left alternative CA-AG-groupoid is nuclear square.\end{corollary}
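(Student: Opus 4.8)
The plan is to derive this corollary directly by assembling the three component properties—left, middle, and right nuclear square—each of which has already been supplied by a result stated above. Recall that, by the definition in the preliminaries, $S$ is \emph{nuclear square} precisely when it is simultaneously left nuclear square, middle nuclear square, and right nuclear square. So the entire task reduces to verifying these three conditions separately and then invoking the definition.

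First I would let $S$ be a left alternative CA-AG-groupoid. Since $S$ is in particular a CA-AG-groupoid, Theorem \ref{thm3: ca-aggp is left nuclear sq.} applies verbatim: part (i) gives that $S$ is left nuclear square and part (ii) gives that $S$ is right nuclear square. No extra hypothesis is needed here, as the left alternative assumption is simply not used for these two parts.

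Next I would bring in the left alternative hypothesis, which is exactly what powers Theorem \ref{thm: left alternative CAgp imply middle nuclare square}: every left alternative CA-AG-groupoid is middle nuclear square. Applying this to our $S$ supplies the remaining nuclear square identity, namely $a(b^{2}c)=(ab^{2})c$ for all $a,b,c\in S$.

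Having established all three properties, I would conclude by the definition of nuclear square AG-groupoid that $S$ is nuclear square, completing the proof. I do not anticipate any genuine obstacle: the corollary is a bookkeeping combination of Theorem \ref{thm3: ca-aggp is left nuclear sq.} (left and right) with Theorem \ref{thm: left alternative CAgp imply middle nuclare square} (middle), so the only thing to be careful about is that the hypotheses of both theorems are met—CA-ness for the first and left alternativity together with CA-ness for the second—both of which hold for $S$ by assumption.
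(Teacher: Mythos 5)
Your proposal is correct and matches the paper's own proof exactly: the paper also obtains this corollary by coupling Theorem \ref{thm3: ca-aggp is left nuclear sq.} (which gives left and right nuclear square for any CA-AG-groupoid) with Theorem \ref{thm: left alternative CAgp imply middle nuclare square} (which gives middle nuclear square under the left alternative hypothesis). Nothing is missing; this is precisely the bookkeeping argument the authors intended.
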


\noindent As, for a right alternative AG-groupoid $S$ the following
conditions are equivalent \cite[Theorem 3.2]{on relation b/w right alternate=000026nuclear square imtiaz}.

\noindent $(i)$ $S$ is right nuclear square.

\noindent $(ii)$ $S$ is middle nuclear square.

\noindent $(iii)$ $S$ is nuclear square .\\
Using these results and Theorem \ref{Every-CAag is right Nuc sq},
we conclude that:

\begin{corollary}\label{cor: right alternative CAgp is nuclear square}Every
right alternative and hence alternative CA-AG-groupoid is nuclear
square.\end{corollary}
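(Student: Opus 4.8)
The plan is to reduce the statement to the already-established right nuclear square property together with the cited equivalence for right alternative AG-groupoids, so that essentially no fresh element-chasing is required. First I would fix a right alternative CA-AG-groupoid $S$ and record the single fact I need about it \emph{as} a CA-AG-groupoid: by Theorem \ref{Every-CAag is right Nuc sq}$(ii)$, $S$ is right nuclear square. This holds for every CA-AG-groupoid, irrespective of alternativity, so the right alternative hypothesis plays no role in this first step and can be set aside for the moment.

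Next I would invoke the external equivalence \cite[Theorem 3.2]{on relation b/w right alternate=000026nuclear square imtiaz}, which asserts that for a right alternative AG-groupoid the three conditions ``right nuclear square,'' ``middle nuclear square,'' and ``nuclear square'' are mutually equivalent. Since $S$ is right alternative, this equivalence is available; and since $S$ already satisfies the first of the three conditions, I may read off that it satisfies the third, i.e. $S$ is nuclear square. That settles the right alternative case.

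For the clause \emph{``and hence alternative,''} I would simply observe that an alternative AG-groupoid is by definition both left and right alternative, hence in particular right alternative, and therefore falls under the case just treated. No independent argument for the alternative case is needed.

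I do not anticipate a genuine obstacle here; the proof is a short assembly of prior results rather than a computation. The only point requiring care is bookkeeping: I must confirm that the ``right nuclear square'' property delivered by Theorem \ref{Every-CAag is right Nuc sq} is literally condition $(i)$ of the cited Theorem 3.2, and that the ``nuclear square'' of condition $(iii)$ matches the three-part definition (left, middle, and right nuclear square) given in the Preliminaries. Once these identifications are checked against the definitions, the implication chain ``CA $\Rightarrow$ right nuclear square'' followed by ``right alternative $+$ right nuclear square $\Rightarrow$ nuclear square'' closes the corollary at once.
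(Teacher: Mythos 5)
Your proposal is correct and is essentially the paper's own argument: the paper likewise combines Theorem \ref{Every-CAag is right Nuc sq} (every CA-AG-groupoid is right nuclear square) with the cited equivalence of right, middle, and full nuclear square for right alternative AG-groupoids \cite[Theorem 3.2]{on relation b/w right alternate=000026nuclear square imtiaz} to conclude nuclear square, with the alternative case following since alternative implies right alternative. No differences worth noting.
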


\section{CA-AG-GROUPOIDS, AG{*}-GROUPOIDS AND AG{*}{*}-GROUPOIDS\label{sec:CA-AG-groupoid,-AG*-groupoids-and}}

In this section it is seen that in general CA-AG-groupoid are neither
AG{*}-groupoids nor AG{*}{*}-groupoids. So the subclass, CA-AG-groupoid
is different from both of AG{*}-groupoids and AG{*}{*}-groupoids.

To begin with, consider the following; 

\begin{example}CA-AG-groupoid of order $4$, given in Table $12$,
is not an AG{*}-groupoid because $(aa)a\neq a(aa)$.\end{example}

\noindent \begin{center}
\begin{tabular}{c|cccc}
$\cdot$  & $a$  & $b$  & $c$  & $d$\tabularnewline
\hline 
$a$  & $b$  & $c$  & $c$  & $c$\tabularnewline
$b$  & $d$  & $c$  & $c$  & $c$\tabularnewline
$c$  & $c$  & $c$  & $c$  & $c$\tabularnewline
$d$  & $c$  & $c$  & $c$  & $c$\tabularnewline
\multicolumn{1}{c}{} &  &  &  & \tabularnewline
\multicolumn{5}{c}{Table $12$}\tabularnewline
\end{tabular}
\par\end{center}

\label{exp45 CA-AGgp not AG* nor AG**}CA-AG-groupoid of order $8$,
presented in Table $13$, is not an AG{*}{*}-groupoid because $3(2\cdot1)\neq2(3\cdot1)$

\noindent \begin{center}
\begin{tabular}{c|cccccccc}
$\cdot$ & $1$ & $2$ & $3$ & $4$ & $5$ & $6$ & $7$ & $8$\tabularnewline
\hline 
$1$ & $4$ & $4$ & $6$ & $4$ & $4$ & $4$ & $8$ & $4$\tabularnewline
$2$ & $5$ & $4$ & $4$ & $4$ & $4$ & $8$ & $4$ & $4$\tabularnewline
$3$ & $4$ & $7$ & $4$ & $4$ & $8$ & $4$ & $4$ & $4$\tabularnewline
$4$ & $4$ & $4$ & $4$ & $4$ & $4$ & $4$ & $4$ & $4$\tabularnewline
$5$ & $4$ & $4$ & $4$ & $4$ & $4$ & $4$ & $4$ & $4$\tabularnewline
$6$ & $4$ & $4$ & $4$ & $4$ & $4$ & $4$ & $4$ & $4$\tabularnewline
$7$ & $4$ & $4$ & $4$ & $4$ & $4$ & $4$ & $4$ & $4$\tabularnewline
$8$ & $4$ & $4$ & $4$ & $4$ & $4$ & $4$ & $4$ & $4$\tabularnewline
\multicolumn{1}{c}{} &  &  &  &  &  &  &  & \tabularnewline
\multicolumn{9}{c}{Table $13$}\tabularnewline
\end{tabular}
\par\end{center}

\noindent Further the following example establishes that there are
AG{*}{*}-groupoids which are not CA-AG-groupoid.

\begin{example} AG{*}{*}-groupoid of order $3$, given in Table $14$,
is not a CA-AG-groupoid because $2(2\cdot3)\neq3(2\cdot2)$.\end{example}

\noindent \begin{center}
\begin{tabular}{c|ccc}
$\cdot$ & $1$ & $2$ & $3$\tabularnewline
\hline 
$1$ & $1$ & $1$ & $1$\tabularnewline
$2$ & $1$ & $1$ & $3$\tabularnewline
$3$ & $1$ & $2$ & $1$\tabularnewline
\multicolumn{1}{c}{} &  &  & \tabularnewline
\multicolumn{4}{c}{Table $14$}\tabularnewline
\end{tabular}
\par\end{center}

However, we have the following;

\begin{theorem}Every right commutative AG{*}{*}-groupoid is a CA-AG-groupoid.\end{theorem}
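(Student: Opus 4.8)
The plan is to verify the cyclic associative identity $a(bc)=c(ab)$ directly from the two defining identities of the hypothesis, namely right commutativity $a(bc)=a(cb)$ and the AG\st\!\!-identity $a(bc)=b(ac)$. Unlike the earlier results such as Theorem \ref{Thm: CA implies Bol*}, whose proofs require long chains alternating the medial, paramedial and left invertive laws, here I expect the argument to be extremely short, since the two hypotheses combine almost immediately to give the conclusion. In fact I anticipate that neither the medial law nor the left invertive law will be needed at all.

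Concretely, I would start from the left-hand side $a(bc)$ and first apply right commutativity to transpose the two inner factors, obtaining $a(cb)$. I would then apply the AG\st\!\!-identity in the form $x(yz)=y(xz)$ with the substitution $x=a$, $y=c$, $z=b$, which rewrites $a(cb)$ as $c(ab)$. Stringing these together gives
\[
a(bc)=a(cb)=c(ab),
\]
which is precisely the cyclic associativity identity, so $S$ is a CA-AG-groupoid.

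There is essentially no obstacle to overcome. The only points that need care are the order in which the two identities are invoked (commute the inner pair first, then use AG\st to pull the new middle letter to the outside) and the correct matching of variables when instantiating the AG\st\!\!-identity. No case analysis, auxiliary lemma, or appeal to the medial or left invertive laws is required, and the hypotheses are used exactly once each.
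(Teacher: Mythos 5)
Your proof is correct and is essentially identical to the paper's: both apply right commutativity to get $a(bc)=a(cb)$ and then the AG${}^{**}$-identity $x(yz)=y(xz)$ to get $a(cb)=c(ab)$. One small notational caution: you label the identity $a(bc)=b(ac)$ as the AG${}^{*}$-identity, but in this paper that identity defines AG${}^{**}$-groupoids (AG${}^{*}$ is $(ab)c=b(ac)$); your actual use of it is the correct one for the hypothesis.
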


\begin{proof}Let $S$ be a right commutative AG{*}{*}-groupoid and
$a,b,c\in S$. Using definition of right commutative and AG{*}{*}-groupoid,
we have $a(bc)=a(cb)=c(ab)$. Hence $S$ is a CA-AG-groupoid.\end{proof}

The following example is to show that above Theorem is true only for
AG{*}{*}-groupoid.

\noindent \begin{example}Right commutative AG-groupoid of order $3$,
given in Table $15$, is not a CA-AG-groupoid because $3(3\cdot1)\neq1(3\cdot3)$.\end{example}

\noindent \begin{center}
\begin{tabular}{c|ccc}
$\cdot$  & $1$  & $2$  & $3$\tabularnewline
\hline 
$1$  & $1$  & $1$  & $1$\tabularnewline
$2$  & $1$  & $1$  & $1$\tabularnewline
$3$  & $2$  & $2$  & $2$\tabularnewline
\multicolumn{1}{c}{} &  &  & \tabularnewline
\multicolumn{4}{c}{Table $15$}\tabularnewline
\end{tabular}
\par\end{center}

The following example is to show that there are right commutative
AG-groupoids which are not AG{*}{*}-groupoids

\begin{example}Table $16$, represents a right commutative AG-groupoid
of order $3$, which is not an AG{*}{*}-groupoid because $a(cb)\neq c(ab)$.\end{example}

\noindent \begin{center}
\begin{tabular}{c|ccc}
$\cdot$ & $a$ & $b$ & $c$\tabularnewline
\hline 
$a$ & $a$ & $a$ & $a$\tabularnewline
$b$ & $a$ & $a$ & $a$\tabularnewline
$c$ & $b$ & $b$ & $b$\tabularnewline
\multicolumn{1}{c}{} &  &  & \tabularnewline
\multicolumn{4}{c}{Table $16$}\tabularnewline
\end{tabular}
\par\end{center}

The following example is to show that neither every CA-AG-groupoid
is right commutative, nor every AG{*}{*} is right commutative.

\begin{example}\label{exp53 CA not right commute}\label{exp54 AG** not right commute}Table
13, given above represents a CA-AG-groupoid of order $8$. As $1(2\cdot3)\neq1(3\cdot2)$,
so it is not a right commutative AG-groupoid.

Table $17$ represents an AG{*}{*}-groupoid of order $3$, which is
not right commutative because $3(2\cdot3)\neq3(3\cdot2)$.

\noindent \begin{center}%
\begin{tabular}{c|ccc}
$\cdot$ & $1$ & 2 & $3$\tabularnewline
\hline 
$1$ & $1$ & $1$ & $1$\tabularnewline
$2$ & $1$ & $1$ & $3$\tabularnewline
$3$ & $1$ & $2$ & $1$\tabularnewline
\multicolumn{1}{c}{} &  &  & \tabularnewline
\multicolumn{4}{c}{Table $17$}\tabularnewline
\end{tabular}\end{center}\end{example}

However, we have the following;

\begin{theorem}Let $S$ be a CA-AG-groupoid then $S$ is right commutative
if and only if $S$ is AG{*}{*}.\end{theorem}\begin{proof}Let $S$
be a CA-AG-groupoid and $a,b,c\in S$. Suppose $S$ is right commutative,
then, by using definition of right commutative and cyclic associativity,
we have $a(bc)=a(cb)=b(ac)$. Hence $S$ is AG{*}{*}.

\quad{}Conversely, let $S$ be an AG{*}{*}-groupoid, then by cyclic
associativity and definition of AG{*}{*}, we have
\[
a(bc)=c(ab)=a(cb)=b(ac)=c(ba)=a(cb)\Rightarrow a(bc)=a(cb).
\]
Hence $S$ is right commutative.\end{proof}

As every AG{*}{*} having a cancellative element is T$^{1}$-AG-groupoid
\cite{9} and hence is T$^{3}$-AG-groupoid \cite{9}, so we have
the following corollary.

\begin{corollary}Every right commutative CA-AG-groupoid having a
cancellative element is a T$^{1}$-AG-groupoid and hence a T$^{3}$-AG-groupoid.\end{corollary}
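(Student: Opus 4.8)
The plan is simply to chain the equivalence of the immediately preceding theorem with the cited structural result on AG{*}{*}-groupoids; no fresh computation with the identities $a(bc)=c(ab)$, $(ab)c=(cb)a$ or $a(bc)=a(cb)$ should be required, so this is a purely deductive corollary rather than a calculation.

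First I would observe that the hypotheses place $S$ in exactly the situation covered by the previous theorem: $S$ is a CA-AG-groupoid that is moreover right commutative. The equivalence established there (for a CA-AG-groupoid, right commutative $\Leftrightarrow$ AG{*}{*}) then yields at once that $S$ is an AG{*}{*}-groupoid.

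Next I would carry the remaining hypothesis across: passing from ``right commutative CA-AG-groupoid'' to ``AG{*}{*}-groupoid'' does not touch the underlying groupoid $(S,\cdot)$, so the element that is cancellative by assumption remains a cancellative element of the AG{*}{*}-groupoid $S$. I would then invoke the cited result \cite{9}, that every AG{*}{*}-groupoid possessing a cancellative element is a T$^{1}$-AG-groupoid. Applying the second half of the same citation --- that such a T$^{1}$-AG-groupoid is in fact a T$^{3}$-AG-groupoid --- completes the argument, giving that $S$ is T$^{1}$ and hence T$^{3}$.

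The only point needing any care, and hence the mild obstacle, is the bookkeeping in the middle step: one must check that the cited theorem on AG{*}{*}-groupoids genuinely applies, i.e. that the cancellative element survives the reclassification of $S$ as AG{*}{*}. Since this reclassification is an identity-level statement about the \emph{same} operation on the \emph{same} carrier set, rather than a change of underlying structure, this is immediate, and the corollary follows.
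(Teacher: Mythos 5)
Your proposal is correct and follows exactly the route the paper intends: apply the preceding theorem (right commutative CA-AG-groupoid $\Leftrightarrow$ CA-AG{*}{*}-groupoid) to reclassify $S$ as AG{*}{*}, then invoke the cited result of \cite{9} that an AG{*}{*}-groupoid with a cancellative element is T$^{1}$ and hence T$^{3}$. Your extra remark that the cancellative element survives the reclassification, since the carrier set and operation are unchanged, is a harmless elaboration of what the paper leaves implicit.
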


Since every T$^{1}$-AG-groupoid is AG{*}{*} \cite[Theorem 3.4]{on relation b/w right alternate=000026nuclear square imtiaz},
thus;

\begin{corollary}CA-T$^{1}$-AG-groupoid is right commutative.\end{corollary}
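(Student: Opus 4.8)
The plan is to obtain this corollary by simply chaining the two facts that immediately precede it, so that no fresh computation is really required. First I would observe that a CA-T$^{1}$-AG-groupoid $S$ is, in particular, a T$^{1}$-AG-groupoid; hence by \cite[Theorem 3.4]{on relation b/w right alternate=000026nuclear square imtiaz} it is an AG{*}{*}-groupoid. Thus $S$ is simultaneously a CA-AG-groupoid and an AG{*}{*}-groupoid.

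Next I would invoke the Theorem just proved, which asserts that for a CA-AG-groupoid being right commutative is equivalent to being AG{*}{*}. Applying the ``AG{*}{*} $\Rightarrow$ right commutative'' direction of that theorem to $S$, I conclude that $S$ is right commutative, which is exactly the claim.

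If one preferred a self-contained argument that bypasses the if-and-only-if theorem, the right-commutative identity can be recovered directly once $S$ is known to be AG{*}{*}: cyclic associativity gives $a(bc)=c(ab)$, while the AG{*}{*} law applied to the triple $(c,a,b)$ rewrites $c(ab)=a(cb)$, so that $a(bc)=a(cb)$ for all $a,b,c\in S$. Either route makes clear that the only external ingredient is the cited implication T$^{1}$ $\Rightarrow$ AG{*}{*}, and I do not anticipate any substantive obstacle: the corollary is a formal consequence of two already-established results, the only point to check being that the hypotheses of each result are met by a groupoid that is both CA and T$^{1}$.
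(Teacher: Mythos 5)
Your proposal is correct and follows exactly the paper's intended argument: the paper derives this corollary by combining the cited fact that every T$^{1}$-AG-groupoid is AG{*}{*} with the ``AG{*}{*} $\Rightarrow$ right commutative'' direction of the immediately preceding theorem, which is precisely your chain. Your optional direct verification $a(bc)=c(ab)=a(cb)$ is also sound and merely inlines the relevant half of that theorem's proof.
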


\begin{theorem}\label{cor: ag-band imply CA}An AG{*}-band is CA.\end{theorem}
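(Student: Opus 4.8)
\emph{Proof proposal.} The plan is to reduce the claim to commutativity: once we show that an AG\st-band is commutative, Remark~\ref{remark: commutativite AG-groupoid is associative} makes it an associative (hence commutative) semigroup, and Proposition~\ref{cor. comm semigp imply CA} then delivers cyclic associativity for free. So the whole problem becomes: prove that an AG\st-groupoid in which every element is idempotent satisfies $ab=ba$. I would first assemble two auxiliary identities out of the AG\st\ law $(ab)c=b(ac)$, the left invertive law $(ab)c=(cb)a$, and idempotency $aa=a$, and then chain them together.

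The first auxiliary identity is \emph{right commutativity}. Equating the AG\st\ law and the left invertive law gives $b(ac)=(cb)a$, and applying the AG\st\ law once more to the right-hand side yields $(cb)a=b(ca)$; hence $b(ac)=b(ca)$ for all $a,b,c$. The second auxiliary identity comes from specializing the AG\st\ law with equal left factors: $(aa)b=a(ab)$, so idempotency gives $a(ab)=ab$. With these in hand the commutativity chain is short. Since $ab$ is idempotent, $ab=(ab)(ab)$; reading $(ab)(ab)$ through the AG\st\ law (with $ab$ playing the role of the third factor) produces $b\bigl(a(ab)\bigr)$, which collapses to $b(ab)$ by $a(ab)=ab$; right commutativity turns $b(ab)$ into $b(ba)$; and finally $b(ba)=ba$ again by the second auxiliary identity. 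Thus $ab=ba$, and the reduction above finishes the proof.

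The step I expect to be the real obstacle is finding this particular route, specifically the move of exploiting idempotency of the \emph{product} $ab$ itself (i.e. $(ab)(ab)=ab$) and then peeling it apart with the AG\st\ law. A more naive attack on the target identity $a(bc)=c(ab)$ rewrites both sides via the AG\st\ law as $a(bc)=(ba)c$ and $c(ab)=(ac)b$, and then stalls at the requirement $(ba)c=(ac)b$, which (modulo the remaining axioms) is essentially equivalent to commutativity anyway. So one is pushed back toward proving $ab=ba$ directly, and the crux is to organize the available identities into the clean collapse $ab=(ab)(ab)=b(ab)=b(ba)=ba$ rather than to discover any deep new relation.
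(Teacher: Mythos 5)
Your proof is correct, but it takes a genuinely different route from the paper. The paper proves the CA identity directly, by an eleven-step equational chain $a(bc)=(ba)c=(ca)b=(ca)(bb)=(cb)(ab)=\cdots=(ac)b=c(ab)$ that interleaves the AG{*} law, the left invertive law, the \emph{medial law}, and idempotency; it never establishes commutativity along the way. You instead prove the stronger structural fact that an AG{*}-band is commutative, via the collapse $ab=(ab)(ab)=b(a\cdot ab)=b(ab)=b(ba)=ba$ (using only the AG{*} law, the left invertive law, and idempotency --- the medial law is never needed), and then finish by citing Remark~\ref{remark: commutativite AG-groupoid is associative} and Proposition~\ref{cor. comm semigp imply CA}. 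Each step of your chain checks out: your right-commutativity identity $b(ac)=b(ca)$ holds in any AG{*}-groupoid, and $a(ab)=(aa)b=ab$ uses idempotency correctly. What your approach buys is economy of scope: commutativity plus the Remark immediately gives associativity, so your argument also subsumes the paper's separate Lemma~\ref{thm:An-AG*-groupoid idemp is semigp} (an AG{*}-band is a semigroup), and in fact shows an AG{*}-band is a commutative semigroup --- a sharper conclusion than either of the paper's two results states individually. What the paper's direct chain buys is self-containedness: it needs no prior propositions, only the defining identities, which matters for readers treating the section's results as independent equational facts.
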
\begin{proof}Let
$S$ be an AG{*}-band and $a,b,c\in S$. Then
\begin{align*}
a(bc) & =(ba)c=(ca)b=(ca)(bb)=(cb)(ab)=(a\cdot cb)b=(b\cdot cb)a\\
 & =(cb\cdot b)a=(bb\cdot c)a=(bc)a=(ac)b=c(ab)\Rightarrow a(bc)=c(ab).
\end{align*}
Hence $S$ is CA-AG-groupoid.\end{proof}

Following example verifies that the converse of Theorem \ref{cor: ag-band imply CA},
is not valid.

\begin{example}Table $18$, represents a CA-AG-groupoid of order
$4$, which is not AG{*}-band, because $(4\cdot4)4\neq4(4\cdot4)$
and $2\cdot2\neq2$.\end{example}

\noindent \begin{center}
\begin{tabular}{c|cccc}
$\cdot$ & $1$ & $2$ & $3$ & $4$\tabularnewline
\hline 
$1$ & $1$ & $1$ & $1$ & $1$\tabularnewline
$2$ & $1$ & $1$ & $1$ & $1$\tabularnewline
$3$ & $1$ & $1$ & $1$ & $2$\tabularnewline
$4$ & $1$ & $1$ & $1$ & $3$\tabularnewline
\multicolumn{1}{c}{} &  &  &  & \tabularnewline
\multicolumn{5}{c}{Table $18$}\tabularnewline
\end{tabular}
\par\end{center}

\begin{lemma}\label{thm:An-AG*-groupoid idemp is semigp}An AG{*}-band
is a semigroup.\end{lemma}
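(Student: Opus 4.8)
The plan is to obtain the result with essentially no new computation, by combining two facts already established in the paper. First I would unpack the hypothesis: an AG*-band is, by definition, an AG-band (so every element is idempotent, $a^{2}=a$) that in addition satisfies the AG* identity $(ab)c=b(ac)$. The goal is to upgrade this to full associativity $(ab)c=a(bc)$.

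The key step is to invoke Theorem \ref{cor: ag-band imply CA}, which asserts that every AG*-band is a CA-AG-groupoid. Since $S$ is also an AG-band by assumption, this shows that $S$ is in fact a CA-AG-band. At that point I would apply Theorem \ref{thm:Every-CA-AG-band commutative}, which states that every CA-AG-band is a commutative semigroup. Chaining the two, $S$ is a commutative semigroup, and in particular a semigroup, which is exactly the claim.

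I expect no genuine obstacle on this route: all the difficulty has been front-loaded into the two cited theorems, whose proofs already carry out the relevant identity manipulations. If instead one insisted on a self-contained argument, the real work would be to establish commutativity directly from the AG* identity, idempotence, and the left invertive law, and then pass to associativity through Remark \ref{remark: commutativite AG-groupoid is associative}. That commutativity step—assembling a chain $ab=\dots=ba$ of the same flavour as in the proof of Theorem \ref{thm:Every-CA-AG-band commutative}—would be the only place demanding care, but it is entirely superseded by the two-theorem argument above.
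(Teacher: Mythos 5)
Your proposal is correct, but it takes a genuinely different route from the paper. The paper proves this lemma by a fresh, self-contained computation: starting from $a(bc)$ it applies the AG{*} identity, idempotence ($c=cc$), the medial law, and the left invertive law in a nine-step chain to arrive at $(ab)c$ directly, never invoking cyclic associativity at all. You instead chain two results the paper has already established: Theorem \ref{cor: ag-band imply CA} (an AG{*}-band is CA, proved immediately before this lemma) to conclude that $S$ is a CA-AG-band, and then Theorem \ref{thm:Every-CA-AG-band commutative} (every CA-AG-band is a commutative semigroup) to conclude associativity. Both cited theorems precede the lemma and their proofs do not depend on it, so there is no circularity, and your argument is valid. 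Your route buys modularity and in fact a strictly stronger conclusion --- $S$ comes out commutative as well as associative, which the paper's proof of this lemma does not record --- at the cost of resting on the correctness of the two earlier identity computations. The paper's route buys independence: the lemma stands on its own as a direct verification, structurally parallel to (rather than a consequence of) Theorem \ref{cor: ag-band imply CA}, which is presumably why the authors wrote it that way.
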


\begin{proof}Let $S$ be an AG{*}-band and $a,b,c\in S.$ Then
\begin{align*}
a(bc) & =(ba)c=(ba)(cc)=(bc)(ac)=(a\cdot bc)c=(c\cdot bc)a\\
 & =(bc\cdot c)a=(cc\cdot b)a=(cb)a=(ab)c\Rightarrow a(bc)=(ab)c.
\end{align*}
Hence $S$ is a semigroup.\end{proof}Since every AG{*}{*}-band is
a commutative semigroup \cite[Lemma 20]{m.shah phd thesis} and every
AG{*}{*}-$3$-band is a commutative semigroup \cite{AG**}, also by
Proposition \ref{cor. comm semigp imply CA}, every commutative semigroup
is CA-AG-groupoid. Hence we have the following corollary.

\begin{corollary}\label{cor: AG**-band and 3-band are CA-AGgp}$(i)$
Every AG{*}{*}-band is a CA-AG-groupoid.

~~~~~~~~~~~~~~~~~ $(ii)$ Every AG{*}{*}-$3$-band
is a CA-AG-groupoid.\end{corollary}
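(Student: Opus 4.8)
The plan is to prove both parts by the same two-step reduction, since in each case the structure under consideration is already known to be a commutative semigroup. For part $(i)$, I would first recall that every AG{*}{*}-band is a commutative semigroup, a fact established in \cite[Lemma 20]{m.shah phd thesis}. Having thereby reduced the claim to a statement about commutative semigroups, I would then apply Proposition \ref{cor. comm semigp imply CA}, which asserts that every commutative semigroup is a CA-AG-groupoid. Composing these two facts immediately yields that every AG{*}{*}-band is a CA-AG-groupoid.

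For part $(ii)$, the argument is identical in shape: I would invoke the known result that every AG{*}{*}-$3$-band is a commutative semigroup \cite{AG**}, and then again appeal to Proposition \ref{cor. comm semigp imply CA} to conclude that it is a CA-AG-groupoid. No separate idea is required for the $3$-band case once the commutativity-of-semigroup input is in hand.

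Since both parts are obtained purely by chaining previously established results --- a structural characterization drawn from the literature together with an implication proved earlier in this paper --- there is essentially no computational content to the corollary, and hence no substantive obstacle to overcome. In particular, no direct manipulation of the cyclic associativity identity $a(bc)=c(ab)$ is needed, because that verification has already been absorbed into Proposition \ref{cor. comm semigp imply CA}. The only point demanding a little care is to cite the correct source for each commutativity statement, namely \cite[Lemma 20]{m.shah phd thesis} for the band case and \cite{AG**} for the $3$-band case, as these come from different references.
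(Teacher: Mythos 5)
Your proposal is correct and matches the paper's own argument exactly: the paper likewise derives both parts by citing that every AG{*}{*}-band is a commutative semigroup \cite[Lemma 20]{m.shah phd thesis} and that every AG{*}{*}-$3$-band is a commutative semigroup \cite{AG**}, and then applying Proposition \ref{cor. comm semigp imply CA}. Nothing further is needed.
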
As every AG-monoid is AG{*}{*}
\cite{AG**}, so from Corollary \ref{cor: AG**-band and 3-band are CA-AGgp}
the following is obvious;

\begin{corollary}Every AG-band having left identity is CA-AG-groupoid.\end{corollary}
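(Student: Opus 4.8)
The plan is to recognize that the hypothesis ``AG-band having left identity'' is exactly the hypothesis needed to place the structure inside the class of AG{*}{*}-bands, and then to quote Corollary \ref{cor: AG**-band and 3-band are CA-AGgp}(i) verbatim. So the entire argument is an inclusion chase rather than a computation.

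First I would unwind the definitions. Let $S$ be an AG-band possessing a left identity $e$, so that $ea=a$ for all $a\in S$. By the definition of an AG-monoid, $S$ is precisely an AG-groupoid with a left identity, hence $S$ is an AG-monoid. Next I would invoke the cited fact that every AG-monoid is AG{*}{*} \cite{AG**}; this upgrades $S$ to an AG{*}{*}-groupoid, i.e.\ $S$ satisfies $a(bc)=b(ac)$ for all $a,b,c\in S$. Crucially, the band hypothesis ($a^{2}=a$ for every $a\in S$) is untouched by this upgrade, so $S$ is simultaneously idempotent and AG{*}{*}; that is, $S$ is an AG{*}{*}-band.

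The final step is then immediate: Corollary \ref{cor: AG**-band and 3-band are CA-AGgp}(i) asserts that every AG{*}{*}-band is a CA-AG-groupoid, and applying it to $S$ gives the claim. I expect no genuine obstacle here: the only substantive observation is the identification ``AG-band with left identity $\Longrightarrow$ AG-monoid $\Longrightarrow$ AG{*}{*}-band,'' and each implication is forced directly by the relevant definition or by the single cited inclusion AG-monoid $\subseteq$ AG{*}{*}. The one point worth stating explicitly for the reader is that the left identity is used solely to trigger the AG-monoid (hence AG{*}{*}) status, after which idempotency is simply carried along, which is why the word ``obvious'' in the lead-in is justified.
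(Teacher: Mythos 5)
Your proof is correct and follows exactly the paper's route: an AG-band with left identity is an AG-monoid, hence an AG{*}{*}-groupoid by the cited inclusion, hence an AG{*}{*}-band, and Corollary \ref{cor: AG**-band and 3-band are CA-AGgp}$(i)$ finishes the argument. The paper compresses this same chain into its one-sentence lead-in; you have merely made each link explicit.
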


\begin{theorem}\label{thm:50}Every CA-AG-$3$-band is a commutative
semigroup.\end{theorem}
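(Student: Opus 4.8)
The plan is to prove that every CA-AG-3-band is a commutative semigroup, following the same algebraic strategy that succeeded for the CA-AG-band in Theorem \ref{thm:Every-CA-AG-band commutative}. Recall that an AG-3-band satisfies $(aa)a=a(aa)=a$ for all $a$, so the idempotent-style collapse $a^2=a$ used in the band case is no longer available; instead the usable identity is that the cube of each element (in either association) returns the element itself. The goal is again to show $ab=ba$ for arbitrary $a,b\in S$, after which Remark \ref{remark: commutativite AG-groupoid is associative} upgrades commutativity to associativity and hence to a commutative semigroup.

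First I would set up the toolkit: cyclic associativity $a(bc)=c(ab)$, the left invertive law $(ab)c=(cb)a$, the medial law $ab\cdot cd=ac\cdot bd$, the paramedial law available via Corollary \ref{thm2: CA implies paramedial}, and the 3-band relations $a\cdot a^2=a^2\cdot a=a$. The strategy is to start from $ab$, introduce a factor that can be rewritten as a cube so the 3-band identity eventually contracts it away, and push the resulting expression through a chain of rewrites until the roles of $a$ and $b$ are swapped. Concretely I expect to begin by writing $ab$ in a form such as $a(b\cdot b^2\,\text{-related})$ or by expanding one variable into its cube, then repeatedly apply cyclic associativity to cycle the three arguments and the medial/paramedial laws to interleave the $a$'s and $b$'s, exactly as in the displayed computation for Theorem \ref{thm:Every-CA-AG-band commutative}, but with the cube $a^3=a$ playing the role that $a^2=a$ played there.

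The main obstacle will be managing the bookkeeping of the longer rewrite chain: in the AG-band case a single application of $a^2=a$ immediately simplified a product, whereas here the cube identity only fires after the expression has been massaged into a shape like $a(a\cdot a)$ or $(a\cdot a)a$, so I must carefully arrange the intermediate terms so that such a cube is exposed at the right moment. I anticipate that the medial law $aa\cdot bb=ab\cdot ab$ together with cyclic associativity will let me split and recombine squared blocks, and that the critical step is producing a subterm of the form $a\cdot a^2$ or $b\cdot b^2$ (or their left-invertive rearrangements) that collapses to a single letter, thereby cutting the length of the word and making progress toward $ba$.

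Once the chain terminates in $ab=ba$, the proof closes immediately: by Remark \ref{remark: commutativite AG-groupoid is associative} a commutative AG-groupoid is associative, so $S$ is a commutative semigroup, which is exactly the assertion. I would present the argument as a single \texttt{align*} display, each line annotated (in prose before the display) with which laws are used, mirroring the style of the earlier band theorem so that the two results read as companions.
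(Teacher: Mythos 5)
Your proposal never gets past the planning stage: the chain of identities that would establish $ab=ba$ --- which is the entire mathematical content of this theorem --- is nowhere exhibited. Phrases such as ``I expect to begin,'' ``I anticipate,'' and ``the critical step is producing a subterm \dots\ that collapses'' describe what a proof would have to accomplish, but in this kind of identity-chasing argument the existence of a terminating rewrite chain is precisely what must be demonstrated, not assumed; there is no a priori guarantee that massaging with medial/paramedial laws ever swaps the two letters. Moreover, your sketch never identifies the step at which the CA hypothesis does irreplaceable work --- it merely lists cyclic associativity among the available tools --- so even as an outline it does not explain why the conclusion should hold for CA-AG-$3$-bands in particular. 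As written, nothing has been proved.

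The strategy you describe is nevertheless exactly the paper's, and the missing chain turns out to be short: insert the cubes at the outset and regroup.
\begin{align*}
ab &= (aa\cdot a)b=(ba)(aa)=((bb\cdot b)a)(aa)=(aa\cdot a)(bb\cdot b)\\
&=(aa\cdot bb)(ab)=(b\cdot bb)(a\cdot aa)=ba,
\end{align*}
where the first and third equalities use $x=(xx)x$, the second and fourth the left invertive law, the fifth the medial law, the sixth paramediality (Corollary \ref{thm2: CA implies paramedial}), and the last $x(xx)=x$; commutativity plus Remark \ref{remark: commutativite AG-groupoid is associative} then yields a commutative semigroup, as you intended. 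Note two divergences from your sketch: cyclic associativity is never applied directly --- the CA hypothesis enters only through Corollary \ref{thm2: CA implies paramedial}, i.e.\ through paramediality --- and no mid-computation ``exposing'' of a cube is needed, since both cubes are inserted in the very first substitutions and collapse only in the final step, with the medial and paramedial laws doing all the intermediate regrouping.
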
\begin{proof}Let $S$ be a CA-AG-$3$-band.
Then $\forall\, a,b\in S,$ by definition of AG-3-band, left invertive
law and Corollary \ref{thm2: CA implies paramedial};
\begin{eqnarray*}
ab & = & (aa\cdot a)b=(ba)(aa)=((bb\cdot b)a)(aa)\\
 & = & (aa\cdot a)(bb\cdot b)=(aa\cdot bb)(a\cdot b)=(b\cdot bb)(a\cdot aa)\\
\Rightarrow ab & = & ba.
\end{eqnarray*}
Thus $S$ is commutative and hence is a commutative semigroup.\end{proof}

\begin{remark}Let denote the set of all idempotents of an AG-groupoid
$S$ by $E(S)$.\end{remark}

\begin{lemma}For a CA-AG-groupoid $S$, if $E(S)\neq\phi$ then $E(S)$
is a semi-lattice.\end{lemma}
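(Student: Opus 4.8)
The plan is to show that $E(S)$, with the operation inherited from $S$, is itself a commutative AG-band; since the paper defines a semi-lattice to be precisely a commutative AG-band, this yields the statement. I would split the work into two parts: first establish that $E(S)$ is closed under the product, so that it is a genuine sub-AG-groupoid all of whose elements are idempotent (that is, a CA-AG-band), and then invoke the structure theorem already proved for such objects.

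The key step is closure, and it follows at once from the medial law, which holds in every AG-groupoid. For $e,f\in E(S)$ I would compute
\[
(ef)(ef)=(ee)(ff)=ef,
\]
where the first equality is the medial law $ab\cdot cd=ac\cdot bd$ taken with $a=c=e$ and $b=d=f$, and the second uses $e^{2}=e$, $f^{2}=f$. Thus $ef$ is again idempotent, so $ef\in E(S)$ and $E(S)$ is closed.

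Once closure is in hand the remainder is formal: the left invertive law and the cyclic associative identity $a(bc)=c(ab)$ are inherited by any subset closed under the operation, so $(E(S),\cdot)$ is a CA-AG-groupoid every element of which is idempotent, i.e.\ a CA-AG-band. Applying Theorem \ref{thm:Every-CA-AG-band commutative}, $E(S)$ is a commutative semigroup. In particular $E(S)$ is a commutative AG-band, which is by definition a semi-lattice; equivalently, by Remark \ref{remark: commutativite AG-groupoid is associative} commutativity already forces associativity, so a commutative AG-band is an ordinary commutative idempotent semigroup.

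I expect the only genuine content to be the closure computation; there should be no real obstacle beyond it, since the commutativity of a CA-AG-band has already been settled in Theorem \ref{thm:Every-CA-AG-band commutative}. If a self-contained argument avoiding that citation were preferred, the same chain of identities used there, driven by the left invertive and medial laws together with idempotency, applies verbatim to the band $E(S)$ and reproves commutativity directly.
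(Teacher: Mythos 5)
Your proof is correct, and it takes a genuinely different (more modular) route than the paper's. The paper proves the lemma by a direct chain of identities on a pair of idempotents $a,b\in E(S)$: it writes $ab=(aa)(bb)=(ab)(ab)$ and then pushes this through repeated applications of cyclic associativity to land on $(bb)(aa)=ba$, concluding commutativity and hence the semi-lattice property; notably, the closure of $E(S)$ under the product is only implicit there, hidden in the step $ab=(ab)(ab)$, even though closure is needed to speak of $E(S)$ as a semigroup at all. You instead isolate closure as the one computation --- $(ef)(ef)=(ee)(ff)=ef$ by the medial law and idempotency, which is exactly the paper's first two steps read in reverse --- and then observe that $E(S)$, being a closed subset, inherits the left invertive law and cyclic associativity, so it is a CA-AG-band, to which Theorem \ref{thm:Every-CA-AG-band commutative} applies directly (that theorem precedes the lemma in the paper, so there is no circularity). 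Your route buys reuse of an existing result and makes explicit the closure step the paper glosses over; the paper's route buys self-containedness, re-deriving commutativity of idempotents from scratch rather than quoting the band theorem. Either way, the conclusion that $E(S)$ is a commutative AG-band, i.e.\ a semi-lattice (using Remark \ref{remark: commutativite AG-groupoid is associative} for associativity), is sound.
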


\begin{proof}Let $S$ be a CA-AG-groupoid and $a,\, b\in E(S)$.
Then
\begin{align*}
ab & =(aa)(bb)=(ab)(ab)=b(ab\cdot a)=a(b\cdot ab)\\
 & =a(b\cdot ba)=a(a\cdot bb)=(bb)(aa)=ba.
\end{align*}
Thus $E(S)$ is commutative. Hence $E(S)$ is a commutative semigroup
of idempotents and thus is a semi-lattice.\end{proof}

\begin{lemma}Let $S$ be a CA-AG-groupoid such that $E(S)\neq\phi$.
Then $e$ is the identity of $eS$ and $Se$ for every $e\in E(S)$.\end{lemma}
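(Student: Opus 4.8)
The plan is to reduce the whole statement to a single structural fact, namely that in a CA-AG-groupoid every idempotent is central: $ex = xe$ for all $x \in S$ and every $e \in E(S)$. Once centrality is available, checking that $e$ is a two-sided identity on both $eS$ and $Se$ is only a short computation with cyclic associativity, so establishing centrality is the step I expect to be the main obstacle; everything after it is bookkeeping with the three laws (left invertive, medial, paramedial) together with cyclic associativity.

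To prove centrality I would fix $e \in E(S)$ and $x \in S$ and start from the idempotency $e = ee$. By the left invertive law, $ex = (ee)x = (xe)e$. I would then rewrite $(xe)e = (xe)(ee)$ and apply the paramedial law, which is available because every CA-AG-groupoid is paramedial by Corollary \ref{thm2: CA implies paramedial}, to get $(xe)(ee) = (ee)(ex) = e(ex)$; finally cyclic associativity gives $e(ex) = x(ee) = xe$. Chaining these equalities yields at once $(xe)e = xe$ and $ex = (xe)e = xe$, so we obtain both the desired centrality and the companion identity that right multiplication by $e$ fixes every element of $Se$.

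With these two identities the lemma drops out. For $Se$, a generic element is $y = xe$: cyclic associativity gives $e(xe) = e(ex) = xe = y$, while $(xe)e = xe = y$ was just established, so $e$ is a two-sided identity on $Se$. For $eS$, a generic element is $y = ex$: cyclic associativity gives $e(ex) = x(ee) = xe$, which equals $ex = y$ by centrality, and for the right action I would compute $(ex)e = (ex)(ee) = (ee)(xe) = e(xe) = xe = ex = y$ using the medial law followed by cyclic associativity. Hence $e$ is a two-sided identity on $eS$ as well, which completes the argument. The only genuinely delicate point is the opening chain that forces centrality; I do not anticipate difficulty thereafter, since each remaining line is a direct substitution using laws already in force.
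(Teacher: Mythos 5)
Your proposal is correct --- every step checks out against the laws available in a CA-AG-groupoid (left invertive, medial, cyclic associativity, and paramediality via Corollary \ref{thm2: CA implies paramedial}), but it is organized around a different key fact than the paper's proof. The paper never isolates centrality of idempotents; instead it verifies the four required identities directly, by four separate elementwise chains: $e(ea)=ea$ and $(ea)e=ea$ for $eS$, then $e(ae)=ae$ and $(ae)e=ae$ for $Se$, each obtained by shuffling with paramedial, left invertive and cyclic-associative rewrites (the chain for $(eS)e$ in the paper is notably long, some nine steps). Your route instead proves the single structural lemma $ex=xe$ for all $x\in S$, $e\in E(S)$, via
\[
ex=(ee)x=(xe)e=(xe)(ee)=(ee)(ex)=e(ex)=x(ee)=xe,
\]
which is essentially the paper's $(Se)e$ computation prefixed by one left-invertive step, and then reduces all four identity checks to short substitutions. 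What your approach buys: it is more modular, it establishes a strictly stronger intermediate fact (centrality of idempotents, hence $eS=Se$ as sets, which the paper never observes), and it avoids repeating similar manipulations four times. What the paper's direct approach buys: it needs no auxiliary claim and each verification is self-contained, if more repetitive. Both are valid proofs of the statement.
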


\begin{proof}\begin{enumerate}[(i)]\item Using cyclic associativity,
left invertive law, and Corollary \ref{thm2: CA implies paramedial};{\small{}
\begin{align*}
e(eS) & =\underset{a\in S}{\cup}e(ea)=\underset{a\in S}{\cup}(ee)(ea)=\underset{a\in S}{\cup}(ae)(ee)\\
 & =\underset{a\in S}{\cup}(ae)e=\underset{a\in S}{\cup}(ee)a=\underset{a\in S}{\cup}ea=eS.
\end{align*}
}Thus $e$ is the left identity for $eS$. Also by left invertive
law, medial law, cyclic associativity and Corollary \ref{thm2: CA implies paramedial};{\small{}
\begin{align*}
(eS)e & =\underset{a\in S}{\cup}(ea)e=\underset{a\in S}{\cup}(ee\cdot a)e=\underset{a\in S}{\cup}(ea)(ee)=\underset{a\in S}{\cup}(ee)(ae)\\
 & =\underset{a\in S}{\cup}e(ae)=\underset{a\in S}{\cup}e(ea)=\underset{a\in S}{\cup}(ee)(ea)=\underset{a\in S}{\cup}(ae)(ee)\\
 & =\underset{a\in S}{\cup}(ae)e=\underset{a\in S}{\cup}(ee)a=\underset{a\in S}{\cup}ea=eS.
\end{align*}
}Thus $e$ is also right identity of $eS$ and hence the identity
of $eS$.

\item By using cyclic associativity; 
\[
e(Se)=\underset{a\in S}{\cup}e(ae)=\underset{a\in S}{\cup}e(ea)=\underset{a\in S}{\cup}a(ee)=\underset{a\in S}{\cup}ae=Se.
\]
Thus $e$ is the left identity for $Se$. Also, by  Corollary \ref{thm2: CA implies paramedial}
and cyclic associativity;{\small{}
\begin{align*}
(Se)e & =\underset{a\in S}{\cup}(ae)e=\underset{a\in S}{\cup}(ae)(ee)=\underset{a\in S}{\cup}(ee)(ea)\\
 & =\underset{a\in S}{\cup}e(ea)=\underset{a\in S}{\cup}a(ee)=\underset{a\in S}{\cup}ae=Se.
\end{align*}
}Thus $e$ is also the right identity for $Se$. Hence $e$ is the
identity for $Se$.\end{enumerate}\end{proof}

The following example shows that there exist left commutative AG-groupoids
which are not CA-AG-groupoids.

\begin{example}Left commutative AG-groupoid of order $3$ is presented
in Table 19, which is not a CA-AG-groupoid because $3(2\cdot1)\neq1(3\cdot2)$.\end{example}

\noindent \begin{center}
\begin{tabular}{c|ccc}
$\cdot$  & $1$  & 2  & $3$\tabularnewline
\hline 
$1$  & $1$  & $1$  & $1$\tabularnewline
$2$  & $1$  & $1$  & $1$\tabularnewline
$3$  & $2$  & $2$  & $1$\tabularnewline
\multicolumn{1}{c}{} &  &  & \tabularnewline
\multicolumn{4}{c}{Table $19$}\tabularnewline
\end{tabular}
\par\end{center}

Next, we have the following;

\begin{theorem}Every left commutative AG{*}-groupoid is a CA-AG-groupoid.\end{theorem}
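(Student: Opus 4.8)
The plan is to reduce the claim to the theorem proved earlier that every right commutative AG**-groupoid is a CA-AG-groupoid. Writing $S$ for a left commutative AG*-groupoid, it then suffices to verify two things: that $S$ is right commutative, and that $S$ is an AG**-groupoid. Both follow from short manipulations using only the defining AG* identity $(ab)c=b(ac)$, the left invertive law $(ab)c=(cb)a$, and left commutativity $(ab)c=(ba)c$; once they are in hand the cited theorem finishes the argument immediately.

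First I would establish the auxiliary fact that every AG*-groupoid is right commutative, a property that does not even require left commutativity. For $a,b,c\in S$, applying AG*, then the left invertive law, then AG* once more gives
\[
a(bc)=(ba)c=(ca)b=a(cb),
\]
so that $a(bc)=a(cb)$, which is exactly right commutativity. I regard this as the key step and the main obstacle: it is not obvious a priori that AG* forces right commutativity, and the manipulation hinges on invoking the left invertive law at the correct moment to turn $(ba)c$ into $(ca)b$. Lining up the parenthesization and the factor order is the only place where genuine care is needed.

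Next I would show that $S$ is an AG**-groupoid, and here left commutativity enters. For $a,b,c\in S$,
\[
a(bc)=(ba)c=(ab)c=b(ac),
\]
using AG*, then left commutativity, then AG* again; thus $a(bc)=b(ac)$, the AG** identity. Having exhibited $S$ as a right commutative AG**-groupoid, I would conclude by the earlier theorem that $S$ is a CA-AG-groupoid. As an alternative to this structural reduction one could splice the two chains into a single sequence carrying $a(bc)$ directly to $c(ab)$, but the reduction above is shorter and isolates the one nontrivial observation, namely that AG* already entails right commutativity.
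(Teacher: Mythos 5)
Your proposal is correct, but it is packaged differently from the paper's proof. The paper argues by one direct chain: $a(bc)=(ba)c=(ca)b=(ac)b=c(ab)$, applying the AG{*} identity, the left invertive law, left commutativity, and AG{*} again. You instead reduce to the paper's earlier theorem that every right commutative AG{*}{*}-groupoid is CA, via two lemmas: (i) every AG{*}-groupoid is right commutative, since $a(bc)=(ba)c=(ca)b=a(cb)$, and (ii) every left commutative AG{*}-groupoid is AG{*}{*}, since $a(bc)=(ba)c=(ab)c=b(ac)$. Both lemmas are verified correctly, and the cited theorem does precede this statement in the paper, so the reduction is legitimate. Your decomposition buys something the paper does not record: lemma (i) shows right commutativity comes for free from AG{*} alone, which is a genuinely noteworthy observation given that the paper exhibits an AG{*}{*}-groupoid that is \emph{not} right commutative (Table 17) --- so this is special to AG{*}, not shared by its sibling class. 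The paper's approach buys brevity and self-containment: if you unfold your reduction, the combined identity chain becomes $a(bc)=(ba)c=(ca)b=a(cb)=(ca)b=(ac)b=c(ab)$, which retraces the paper's four steps plus a redundant back-and-forth through $a(cb)$; deleting that detour recovers the paper's proof exactly. In short, the two arguments rest on the same algebra, but yours isolates reusable structural facts while the paper's is the most economical direct computation.
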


\begin{proof}Let $S$ be a left commutative AG{*}-groupoid and $a,b,c\in S$,
then by definition of AG{*}, left commutative and left invertive law;
\[
a(bc)=(ba)c=(ca)b=(ac)b=c(ab)\Rightarrow a(bc)=c(ab).
\]
Hence $S$ is CA-AG-groupoid.\end{proof}

Following example shows that a left commutative CA-AG-groupoid may
not be an AG{*}-groupoid.

\begin{example}Left commutative CA-AG-groupoid of order $4$, is
given in Table $20$, which is not AG{*}-groupoid because $(d\cdot d)d\neq d(d\cdot d)$.\end{example}

\noindent \begin{center}
\begin{tabular}{c|cccc}
$\cdot$ & $a$ & $b$ & $c$ & $d$\tabularnewline
\hline 
$a$ & $a$ & $a$ & $a$ & $a$\tabularnewline
$b$ & $a$ & $a$ & $a$ & $a$\tabularnewline
$c$ & $a$ & $a$ & $a$ & $b$\tabularnewline
$d$ & $a$ & $a$ & $a$ & $c$\tabularnewline
\multicolumn{1}{c}{} &  &  &  & \tabularnewline
\multicolumn{5}{c}{Table $20$}\tabularnewline
\end{tabular}
\par\end{center}

\begin{theorem}\label{thm:Every-CA-AG*-groupoid-is commute}Let $S$
be a CA-AG{*}-groupoid. Then the following holds.

$(i)$ $S$ is bi-commutative.

$(ii)$ \label{lem:Every-CA-AG*-groupoid-is semigp}$S$ is semigroup.\end{theorem}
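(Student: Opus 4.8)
The plan is to establish part (ii) first, because once $S$ is known to be associative, part (i) drops out almost immediately. The only tools needed are the three defining identities: the left invertive law $(xy)z=(zy)x$, the AG{*} law $(xy)z=y(xz)$, and cyclic associativity $x(yz)=z(xy)$. My strategy for associativity is to rewrite the two words $(ab)c$ and $a(bc)$ separately, applying these three laws, until both reach a common expression; the convenient meeting point turns out to be $c(ba)$.

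Concretely, I would reduce the left-hand side in two steps: $(ab)c=b(ac)$ by AG{*}, and then $b(ac)=c(ba)$ by cyclic associativity, so that $(ab)c=c(ba)$. For the right-hand side I would chain four rewrites: $a(bc)=c(ab)$ by cyclic associativity, then $c(ab)=(ac)b$ by AG{*} (read right-to-left), then $(ac)b=(bc)a$ by the left invertive law, and finally $(bc)a=c(ba)$ again by AG{*}, giving $a(bc)=c(ba)$. Comparing the two computations yields $(ab)c=a(bc)$, so $S$ is a semigroup. Locating this particular sequence of rewrites, so that both sides actually land on $c(ba)$, is the real content of the statement and the main obstacle; the rest is bookkeeping.

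For part (i), I would use associativity to suppress all parentheses. In this notation the AG{*} law becomes $abc=bac$ and the left invertive law becomes $abc=cba$. Left commutativity $(ab)c=(ba)c$ is then literally the AG{*} identity $abc=bac$, hence immediate. For right commutativity $a(bc)=a(cb)$, equivalently $abc=acb$, I would run the short chain $abc=bac=cab=acb$, where the first and third steps swap the leading pair (AG{*}) and the middle step reverses the word (left invertive). Thus $S$ is both left and right commutative, i.e. bi-commutative.

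As a cross-check, note that associativity together with AG{*} gives $a(bc)=(ab)c=b(ac)$, so $S$ satisfies the AG{*}{*} identity; right commutativity then also follows from the earlier theorem characterising right-commutative CA-AG-groupoids as precisely the AG{*}{*} ones, with left commutativity obtained as above. I would present the self-contained rewriting argument as the primary proof and keep this as a verification.
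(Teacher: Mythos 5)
Your proposal is correct, but it is organized differently from the paper's proof. The paper proves (i) and (ii) as independent, self-contained equational chains in the non-associative setting: left commutativity via a long chain $(ab)c=(cb)a=b(ca)=a(bc)=c(ab)=(ac)b=(bc)a=c(ba)=a(cb)=(ca)b=(ba)c$, right commutativity via a similar seven-step chain, and associativity via the short chain $a(bc)=c(ab)=b(ca)=(cb)a=(ab)c$. You instead prove associativity first---by reducing both $(ab)c$ and $a(bc)$ to the common word $c(ba)$, which is the same kind of rewriting argument in substance---and then exploit associativity to collapse part (i): once parentheses can be dropped, the AG{*} law is literally left commutativity ($abc=bac$), and right commutativity follows from the three-step permutation chain $abc=bac=cab=acb$. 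I checked each of your rewrites against the three laws and they are all valid. What your route buys is brevity and transparency in part (i): in the associative setting the left invertive and AG{*} laws become permutation rules on three-letter words (reversal and transposition of the leading pair), so bi-commutativity is nearly mechanical, whereas the paper needs long chains interleaving all three laws in their bracketed forms. What the paper's route buys is logical independence of the two parts: its proof of bi-commutativity nowhere assumes associativity, though nothing substantive hinges on that here. Your cross-check is also sound, since associativity together with AG{*} gives $a(bc)=(ab)c=b(ac)$, the AG{*}{*} identity, and the paper's earlier theorem (a CA-AG-groupoid is right commutative if and only if it is AG{*}{*}) then re-derives right commutativity.
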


\begin{proof}\begin{enumerate}[(i)]\item Let $S$ be a CA-AG{*}-groupoid
and $a,b,c\in S$, then
\begin{eqnarray*}
(ab)c & = & (cb)a=b(ca)=a(bc)=c(ab)=(ac)b=(bc)a\\
 & = & c(ba)=a(cb)=(ca)b=(ba)c\Rightarrow(ab)c=(ba)c.
\end{eqnarray*}
Hence $S$ is left commutative. Again
\begin{align*}
a(bc) & =c(ab)=b(ca)=(cb)a=(ab)c=b(ac)\\
 & =c(ba)=a(cb)\Rightarrow a(bc)=a(cb).
\end{align*}
Thus $S$ is also right commutative and hence $S$ is bi-commutative.

\item Let $S$ be a CA-AG{*}-groupoid and $a,b,c\in S.$ Then 
\begin{eqnarray*}
a(bc) & = & c(ab)=b(ca)=(cb)a=(ab)c\Rightarrow a(bc)=(ab)c.
\end{eqnarray*}
\end{enumerate}

\quad{} Hence $S$ is a semigroup.\end{proof}

The following example is to show that there exist Bi-commutative AG-groupoids
which are neither CA-AG-groupoids nor AG{*}-groupoids.

\begin{example}Table $21$ represents a Bi-commutative AG-groupoid
of order $4$, which is neither CA-AG-groupoid nor AG{*}-groupoid.\end{example}

\noindent \begin{center}%
\begin{tabular}{c|cccc}
$\cdot$ & $1$ & $2$ & $3$ & $4$\tabularnewline
\hline 
$1$ & $1$ & $1$ & $3$ & $3$\tabularnewline
$2$ & $1$ & $1$ & $4$ & $4$\tabularnewline
$3$ & $3$ & $3$ & $1$ & $1$\tabularnewline
$4$ & $3$ & $3$ & $1$ & $1$\tabularnewline
\multicolumn{1}{c}{} &  &  &  & \tabularnewline
\multicolumn{5}{c}{Table $21$}\tabularnewline
\end{tabular}\end{center}

\noindent Next example shows that there exist CA-AG{*}-groupoids which
are not Bi-commutative AG-groupoid.

\noindent \begin{example}\label{exp: CA-AG*-gp not commutative-1}Table
$22$ shows a CA-AG{*}-groupoid of order $4$, which is not commutative.\end{example}

\noindent \begin{center}%
\begin{tabular}{c|cccc}
$\cdot$ & $1$ & $2$ & $3$ & $4$\tabularnewline
\hline 
$1$ & $3$ & $3$ & $3$ & $3$\tabularnewline
$2$ & $4$ & $3$ & $3$ & $3$\tabularnewline
$3$ & $3$ & $3$ & $3$ & $3$\tabularnewline
$4$ & $3$ & $3$ & $3$ & $3$\tabularnewline
\multicolumn{1}{c}{} &  &  &  & \tabularnewline
\multicolumn{5}{c}{Table $22$}\tabularnewline
\end{tabular}\end{center}

\noindent From the above discussion we conclude that:

\begin{corollary}CA-AG{*}-groupoid is a non-commutative semigroup.\end{corollary}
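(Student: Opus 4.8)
The plan is to read this corollary as the conjunction of two facts that are already established in the excerpt, so that its proof amounts to assembling them rather than to any fresh computation. The universal half, that every CA-AG{*}-groupoid is a semigroup, is supplied verbatim by part $(ii)$ of Theorem \ref{thm:Every-CA-AG*-groupoid-is commute}; I would simply cite it. The existential half, that such a structure need not be commutative, is witnessed by the CA-AG{*}-groupoid of order $4$ displayed in Table $22$ of Example \ref{exp: CA-AG*-gp not commutative-1}, where $1\cdot 2 = 3$ while $2\cdot 1 = 4$, so that $1\cdot 2 \neq 2\cdot 1$.

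The one conceptual point worth flagging is that Theorem \ref{thm:Every-CA-AG*-groupoid-is commute}$(i)$ only yields bi-commutativity, i.e. left commutativity $(ab)c=(ba)c$ together with right commutativity $a(bc)=a(cb)$, and this is strictly weaker than the full commutative law $ab=ba$. It is precisely this gap that makes the word ``non-commutative'' in the statement meaningful: without the example one could not rule out that the bi-commutative semigroups arising here are in fact commutative. Thus the example is not decorative but load-bearing, and I would present it as the essential half of the argument.

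Assembling the two pieces: by Theorem \ref{thm:Every-CA-AG*-groupoid-is commute}$(ii)$ the associative law $a(bc)=(ab)c$ holds throughout the class, so every CA-AG{*}-groupoid is a semigroup; and by Example \ref{exp: CA-AG*-gp not commutative-1} the class contains a member on which $ab=ba$ fails. Hence CA-AG{*}-groupoids form a class of semigroups that is in general non-commutative, which is exactly the assertion. I do not anticipate any genuine obstacle, since both ingredients are already in hand; the only care required is to confirm that the entries of Table $22$ satisfy both the cyclic associative identity $a(bc)=c(ab)$ and the AG{*} identity $(ab)c=b(ac)$, a finite verification that is already asserted when the table is introduced.
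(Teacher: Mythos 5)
Your proposal is correct and matches the paper's own reasoning exactly: the paper derives this corollary by combining Theorem \ref{thm:Every-CA-AG*-groupoid-is commute}$(ii)$ (every CA-AG{*}-groupoid is a semigroup) with the non-commutative witness of Table $22$, which is precisely what the phrase ``From the above discussion we conclude'' refers to. Your additional observation that bi-commutativity does not imply commutativity, so the example is genuinely needed, is a correct and worthwhile clarification but does not change the argument.
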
Since
every T$^{1}$-AG-3-band is AG{*}-groupoid \cite{17}, so from Theorem
\ref{thm:Every-CA-AG*-groupoid-is commute}, we have the following
corollary.\begin{corollary}$(i)$ Every CA-T$^{1}$-AG-$3$-band
is a bi-commutative AG-groupoid.

~~~~~~~~~~~~~~~~~~~$(ii)$ Every CA-T$^{1}$-AG-$3$-band
is a semigroup.\end{corollary}

The relationship among different subclasses of AG-groupoids is given
in the following Venn Diagram.\begin{center}

\noindent \end{center}

\end{document}